\documentclass[11pt,a4paper]{article}

\usepackage[paper=a4paper,left=30mm,right=30mm,top=25mm,bottom=25mm]{geometry}
\usepackage[T1]{fontenc}
\usepackage{lmodern}
\usepackage{amsmath,amssymb,amsthm,mathtools}
\usepackage{microtype}
\usepackage{enumitem}
\usepackage{graphicx}
\usepackage[font=small,labelfont=bf,labelsep=period,skip=7pt]{caption}
\usepackage[section]{placeins}
\usepackage{flafter}
\usepackage{needspace}
\usepackage{etoolbox}
\usepackage{xcolor}
\definecolor{linkblue}{RGB}{30,65,105}
\usepackage[numbers,sort&compress]{natbib}
\usepackage[colorlinks=true,linkcolor=linkblue,citecolor=linkblue,urlcolor=linkblue]{hyperref}
\hypersetup{
  pdftitle={Rank weighting and asymmetry in Blest's rank correlation: two exact regions},
  pdfauthor={Marcus Rockel},
  pdfkeywords={Copula, Spearman's rho, Symmetrized Blest coefficient, Measure of association, Sharp inequality, Asymmetry, Rearrangement inequality, Kantorovich duality}
}

\newtheorem{theorem}{Theorem}[section]
\newtheorem{lemma}[theorem]{Lemma}
\newtheorem{proposition}[theorem]{Proposition}
\newtheorem{corollary}[theorem]{Corollary}
\theoremstyle{remark}
\newtheorem{remark}[theorem]{Remark}

\AddToHook{cmd/section/before}{\Needspace{6\baselineskip}}
\AddToHook{cmd/subsection/before}{\Needspace{5\baselineskip}}
\BeforeBeginEnvironment{theorem}{\Needspace{5\baselineskip}}
\BeforeBeginEnvironment{lemma}{\Needspace{5\baselineskip}}
\BeforeBeginEnvironment{proposition}{\Needspace{5\baselineskip}}
\BeforeBeginEnvironment{corollary}{\Needspace{5\baselineskip}}
\BeforeBeginEnvironment{proof}{\Needspace{3\baselineskip}}

\newcommand{\E}{\mathbb{E}}
\newcommand{\Prob}{\mathbb{P}}
\newcommand{\R}{\mathbb{R}}
\newcommand{\de}{\,\mathrm{d}}
\newcommand{\1}{\mathbf{1}}
\newcommand{\CC}{\mathcal{C}}
\newcommand{\Gam}{\Gamma}
\newcommand{\keywords}[1]{\par\smallskip{\small\noindent\textbf{Keywords:} #1\par}}

\title{Rank weighting and asymmetry in Blest's rank correlation: two exact regions}
\author{Marcus Rockel}
\date{\today}

\begin{document}

\maketitle
\begin{center}
\small\itshape
Department of Quantitative Finance, Institute for Economics, University of Freiburg,\\
Rempartstr.~16, 79098 Freiburg, Germany\\
\upshape\href{mailto:marcus.rockel@finance.uni-freiburg.de}{\nolinkurl{marcus.rockel@finance.uni-freiburg.de}}
\end{center}

\begin{abstract}
Blest's rank correlation $\nu$ is a variant of Spearman's rho $\rho$ that weights the leading ranks of one variable more heavily, at the price that $\nu$ is not symmetric in its arguments.
We quantify both features by determining the exact region of $(\rho,\nu)$ over all bivariate copulas, as well as that of $(\eta,\nu)$, where $\eta$ is the symmetrized Blest coefficient of Genest and Plante.
The latter region is a linear image of the set of all pairs $(\nu(C),\nu(C^\top))$ formed by a copula $C$ and its transpose.
Consequently, Blest's coefficient differs from Spearman's rho by at most $1/4$, and interchanging the two variables changes it by at most $27/64$, improving on the bound $1/2$ implied by the first inequality.
For every given value of $\rho$ or $\eta$, each corresponding extreme value of $\nu$ is attained by exactly one copula, given in closed form and supported on finitely many line segments.
Near countermonotonicity, the upper extremizers of the $(\eta,\nu)$-region are supported on the graph of a function of the second coordinate, yet their conditional laws given the first coordinate carry two atoms.
The proofs rest on a rearrangement inequality with equality case and on explicit Kantorovich potentials.
\end{abstract}

\keywords{Copula; Spearman's rho; Symmetrized Blest coefficient; Measure of association; Sharp inequality; Asymmetry; Rearrangement inequality; Kantorovich duality}
\par\smallskip{\small\noindent\textbf{MSC 2020:} 62H05; 62H20; 60E15; 49Q22\par}

\section{Introduction}

Spearman's rho \cite{spearman1904proof} and Kendall's tau \cite{kendall1938new} measure the agreement between two rankings of the same items, irrespective of the scales on which the underlying quantities are measured, and both treat every position in a ranking alike: exchanging the two leading items costs exactly as much as exchanging the two trailing ones.
Often, however, the importance of a disagreement depends on where in the ranking it occurs.
When judges rank competitors, agreement on the leading places counts for more than agreement near the bottom of the field, and it was subjectively judged Olympic events that led Blest \cite{blest2000rank} to a coefficient giving greater weight to the first ranks.
His coefficient adapts Spearman's rho so that a disagreement about the leading places changes the value more than the same disagreement further down.
Blest's coefficient thus records not only how strongly two rankings agree but also where they disagree.

\Needspace{7\baselineskip}
The emphasis of Blest's coefficient comes from weighting the ranks of one of the two variables, and, as a consequence, the value generally changes when the two rankings exchange roles \cite{genest2003blest}.
These two features raise two quantitative questions:
\begin{enumerate}[label=\textup{(Q\arabic*)},ref=(Q\arabic*),leftmargin=3.2em,topsep=0.8ex,itemsep=0.4ex,before=\itshape,after={\endgraf\vspace{0.6ex}\vspace{0pt}}]
\item\label{q:weighting} How much can the extra weight on the leading ranks move the coefficient away from its unweighted counterpart?
\item\label{q:transpose} How much can the coefficient move when the two variables are interchanged?
\end{enumerate}
Exact regions answer these questions at the population level: the exact region of two coefficients consists of every pair of values that the two attain jointly over all dependence structures.
We call its vertical sections \emph{fibres}, so that the fibre over a given value of the first coefficient collects all compatible values of the second.
From the boundary of an exact region one reads off the sharp inequalities between the two coefficients, and the dependence structures sitting on the boundary reveal what drives the coefficients furthest apart.
In this paper, we determine two exact regions for Blest's coefficient, one for each question.

To make this precise, let $\CC$ denote the class of bivariate copulas, that is, distribution functions on $[0,1]^2$ with uniform marginals, and we refer to \cite{nelsen2006introduction,durante2016principles} for background.
The copulas $M(u,v)\coloneqq\min\{u,v\}$, $W(u,v)\coloneqq\max\{u+v-1,0\}$, and $\Pi(u,v)\coloneqq uv$ describe comonotonicity, countermonotonicity, and independence, respectively.
For a copula $C$, the population version of Blest's rank correlation is
\begin{equation}
  \label{eq:nu-def}
  \nu(C)=24\int_0^1\!\int_0^1(1-u)\,C(u,v)\de u\de v-2,
\end{equation}
as given in \cite[Sec.~3]{genest2003blest}, a normalization that makes $\nu(M)=1$, $\nu(W)=-1$, and $\nu(\Pi)=0$.
Up to its additive normalizing constant, $\nu$ arises from Spearman's rho,
\begin{equation}
  \label{eq:rho-def}
  \rho(C)=12\int_0^1\!\int_0^1 C(u,v)\de u\de v-3,
\end{equation}
see \cite[Sec.~5.1.2]{nelsen2006introduction}, by replacing the uniform weight on the unit square with the density $2(1-u)$.
This density is largest for small $u$, that is, for the leading ranks of the first variable.
Consequently, if $C^\top(u,v)\coloneqq C(v,u)$ denotes the transpose of $C$, then $\nu(C)$ generally differs from $\nu(C^\top)$, and $\nu$ is not a measure of concordance in the sense of Scarsini \cite{scarsini1984measures}.
Such a difference requires $C$ to be non-exchangeable, that is, $C\neq C^\top$, a property that Durante et al.~\cite{durante2010measures} quantify through measures of non-exchangeability.
To remove this dependence on the order of the variables, Genest and Plante \cite[Sec.~4]{genest2003blest} introduced the symmetrized coefficient
\begin{equation}
  \label{eq:eta-def}
  \eta(C)\coloneqq\frac{\nu(C)+\nu(C^\top)}{2}
  =12\int_0^1\!\int_0^1(2-u-v)\,C(u,v)\de u\de v-2,
\end{equation}
which emphasizes the leading ranks of both variables.
It is invariant under transposition but, as observed in \cite[Sec.~4]{genest2003blest}, still violates the reflection axiom of a measure of concordance.
Genest and Plante also derived the limiting distribution of the empirical version of $\nu$ and, through asymptotic efficiency calculations and simulations, assessed it and its symmetric variants as tests of independence.

Question~\ref{q:weighting} is settled at fixed $\rho$: the vertical extent of the $(\rho,\nu)$-region at a given $\rho$ quantifies how much the emphasis on leading ranks can change the coefficient while the overall rank correlation stays the same.
Question~\ref{q:transpose} is settled at fixed $\eta$, which asks how much directional information the symmetrization conceals.
Indeed, $\nu(C^\top)=2\eta(C)-\nu(C)$, so the $(\eta,\nu)$-region is a linear image of the exact region of the pair $(\nu(C),\nu(C^\top))$ and thus answers question~\ref{q:transpose} in full.
In particular, the fibre above $\eta=e$ has length equal to the largest asymmetry $|\nu(C)-\nu(C^\top)|$ among all copulas with $\eta(C)=e$.
Either way, the exact region is a compatibility criterion for two prescribed population values that involves no parametric copula family.

Universal inequalities between rank correlations go back a long way.
Daniels \cite{daniels1950rank} derived inequalities between Spearman's rho and Kendall's tau, and Durbin and Stuart \cite{durbin1951inversions} sharpened this comparison through the inversions in the two rankings, yet the exact region of these two coefficients was found only much later, by Schreyer, Paulin and Trutschnig \cite{schreyer2017exact}, who showed that the classical Durbin--Stuart bound is sharp only at a countable set of points.
A useful inequality, even between familiar coefficients, need not describe the complete range of attainable pairs.
Exact regions have since been determined for many further pairs and triples of measures of association, see, e.g., \cite{kokolbukovsek2021spearman,kokolbukovsek2022exact,kokolbukovsek2023exact,ansari2026rhofootrule}.
For Blest's coefficient itself, Tschimpke \cite{tschimpke2025exact} determined the region with Spearman's rho within the class of bivariate extreme-value copulas, and the exact region with Chatterjee's rank correlation was established in \cite{rockel2026exact}.
To the best of our knowledge, the exact regions of $\nu$ with $\rho$ and with $\eta$ over the class of all copulas have not been determined so far.

Our main results determine both regions, shown in Figure~\ref{fig:regions}.
For the first region, define $\Phi:[-1,1]\to[-1,1]$ by
\begin{equation}
  \label{eq:Phi-def}
  \Phi(r)\coloneqq
  \begin{cases}
    2r+1-\frac34(1+r)^{4/3}, & -1\le r\le0,\\[0.3ex]
    1-\frac34(1-r)^{4/3}, & 0\le r\le1,
  \end{cases}
\end{equation}
and observe that $2r-\Phi(r)=-\Phi(-r)$ for every $r\in[-1,1]$.

\begin{theorem}[Exact $(\rho,\nu)$-region]
\label{thm:rho-nu}
It holds that
\[
  \bigl\{(\rho(C),\nu(C)):C\in\CC\bigr\}
  =\bigl\{(r,n)\in[-1,1]^2:\ 2r-\Phi(r)\le n\le\Phi(r)\bigr\}.
\]
For every $r\in[-1,1]$, exactly one copula $C$ with $\rho(C)=r$ satisfies $\nu(C)=\Phi(r)$, and its survival copula is the only copula with $\rho=r$ and $\nu=2r-\Phi(r)$.
\end{theorem}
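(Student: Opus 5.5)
The plan is to rewrite both coefficients as moments of a single coupling, exploit convexity of the region, and obtain the upper boundary by Lagrangian duality combined with a rearrangement inequality. The lower boundary then follows by a symmetry.

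\textbf{Moment form and convexity.} Let $(U,V)\sim C$ and set $X\coloneqq1-U$, $Y\coloneqq1-V$, both uniform on $[0,1]$. Fubini gives $\int\!\int C\de u\de v=\E[XY]$ and $\int\!\int(1-u)C\de u\de v=\tfrac12\E[X^2Y]$. Hence
\[
\rho(C)=12\,\E[XY]-3,\qquad \nu(C)=12\,\E[X^2Y]-2 .
\]
Both maps are affine in $C$ and $\CC$ is convex and compact, so the region is a compact convex set. Its upper boundary $r\mapsto\max\{\nu(C):\rho(C)=r\}$ is therefore concave and upper semicontinuous. It suffices to compute the support function: for every $\lambda\in\R$ maximize
\[
\E[X^2Y]-\lambda\E[XY]=\E\bigl[\varphi_\lambda(X)\,Y\bigr],\qquad \varphi_\lambda(x)=x^2-\lambda x,
\]
over all couplings of two uniforms.

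\textbf{Rearrangement step and parametrization.} By the Hardy--Littlewood rearrangement inequality, the maximum is attained by the comonotone coupling of $\varphi_\lambda(X)$ and $Y$, that is, $Y=F_\lambda(\varphi_\lambda(X))$ with $F_\lambda$ the distribution function of $\varphi_\lambda(X)$. The cases split as follows:
\begin{itemize}
\item For $\lambda\le0$, $\varphi_\lambda$ is increasing, so the optimizer is $M$ and gives the point $(1,1)$.
\item For $\lambda\ge2$, $\varphi_\lambda$ is decreasing, so the optimizer is $W$ and gives $(-1,-1)$.
\item For $\lambda\in(0,2)$, $\varphi_\lambda$ is V-shaped with minimum at $\lambda/2$. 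The optimizer is then supported on finitely many segments obtained by inverting $F_\lambda$ piecewise.
\end{itemize}
In the V-shaped case, for $\lambda<1$ the right branch exceeds the range $[-\lambda^2/4,0]$ of the left branch. So $F_\lambda$ has two regimes, and the support consists of three pieces (symmetrically for $\lambda>1$). This explains why $\Phi$ is defined separately for $r\ge0$ and $r\le0$, with $\lambda=1$ corresponding to $r=0$. I will compute $r(\lambda)=\rho$ and $n(\lambda)=\nu$ of this optimizer explicitly. Since $F_\lambda$ involves square roots, the moments are polynomials in $\lambda$ and $\sqrt{\cdot}$ terms. Eliminating $\lambda$ should produce $n=\Phi(r)$, with the exponent $4/3$ emerging from the elimination. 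I also need to check that $\lambda\mapsto r(\lambda)$ is a continuous strictly decreasing bijection from $(0,2)$ onto $(-1,1)$.

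This explicit computation and elimination is the main obstacle. I would first carry it out for $\lambda\in(0,1]$, i.e. $r\ge0$, and check $\Phi'(r)=\lambda$, as the envelope relation requires. The case $r\le0$ I would then derive from the identity $2r-\Phi(r)=-\Phi(-r)$ together with a symmetry of $\varphi_\lambda$ under $\lambda\mapsto2-\lambda$, $x\mapsto1-x$, if one exists, and otherwise by direct computation. Since every point on the curve is a support point and $\Phi$ is concave and $C^1$, the upper boundary equals $\Phi$.

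\textbf{Lower boundary, filling, and uniqueness.} For the lower boundary I pass to the survival copula $\hat C$, which replaces $(X,Y)$ by $(1-X,1-Y)$. Then $\rho(\hat C)=\rho(C)$, and expanding $\E[(1-X)^2(1-Y)]$ with uniform marginals gives the affine relation $\nu(\hat C)=2\rho(C)-\nu(C)$. Hence the lower boundary is $2r-\Phi(r)$, and minimizers correspond bijectively to maximizers. Convexity then fills in everything between the two boundaries. For uniqueness, fix $r\in(-1,1)$ and let $C$ attain $\Phi(r)$. Because $\Phi$ is differentiable, the supporting line at $r$ is unique with slope $\lambda=\Phi'(r)$, so $C$ maximizes $\E[\varphi_\lambda(X)Y]$. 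Since $\varphi_\lambda(X)$ has an atomless law (no flat pieces), the equality case of the rearrangement inequality forces $Y=F_\lambda(\varphi_\lambda(X))$ almost surely, which determines $C$. At $r=\pm1$, uniqueness is immediate because $\rho=\pm1$ forces $C=M$ or $C=W$.
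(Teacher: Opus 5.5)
Your proposal is correct and is essentially the paper's proof: the moment representation, the supporting lines $\nu-\lambda\rho$ handled by the rearrangement inequality with its equality case (the paper writes $\lambda=2(1-c)$ and calls the optimizer $D_c$, built from the rank map $\zeta_c$), the survival copula for the lower boundary, and convexity, via the mixtures $(1-t)D_c+t\widehat{D_c}$, to fill each fibre. Two small points: the symmetry you hoped for does exist, since $\varphi_{2-\lambda}(1-x)=\varphi_\lambda(x)+\lambda-1$, so the optimizer for $2-\lambda$ is the first-coordinate reflection of the optimizer for $\lambda$, which sends $(r,n)$ to $(-r,n-2r)$ and, by $2r-\Phi(r)=-\Phi(-r)$, gives the branch $r\le0$; and the equality case of the rearrangement inequality, which you take as known, is the one ingredient the paper proves separately as Lemma~\ref{lem:rearrangement}.
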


For the second region, whose boundary has two regimes, define $\Upsilon:[-1,1]\to[0,\frac{27}{128}]$ by $\Upsilon(-1)\coloneqq0$,
\begin{equation}
  \label{eq:Upsilon-def}
  \Upsilon(e)\coloneqq(1+e)-2^{-1/3}(1+e)^{4/3},
  \qquad e\in\bigl[-\tfrac34,1\bigr],
\end{equation}
and, on $(-1,-\frac34)$, parametrically by
\begin{equation}
  \label{eq:Upsilon-param}
  \begin{gathered}
  \Upsilon(e_b)\coloneqq\frac{b^3(6b^2-15b+8)}{8(1-b)^2},
  \qquad b\in\bigl(0,\tfrac12\bigr),\\[0.5ex]
  \text{where}\qquad
  e_b\coloneqq-1+\frac{b^3(2b^2-9b+8)}{8(1-b)^2}.
  \end{gathered}
\end{equation}
The parametrization is well defined: $b\mapsto e_b$ increases bijectively from $(0,\frac12)$ onto $(-1,-\frac34)$ by Lemma~\ref{lem:eta-nu-values}, and at $e=-\frac34$ both formulas give the value $\frac18$, so $\Upsilon$ is continuous there.

\begin{theorem}[Exact $(\eta,\nu)$-region]
\label{thm:eta-nu}
It holds that
\[
  \bigl\{(\eta(C),\nu(C)):C\in\CC\bigr\}
  =\bigl\{(e,n):\ -1\le e\le1,\ |n-e|\le\Upsilon(e)\bigr\}.
\]
For every $e\in[-1,1]$, exactly one copula $C$ with $\eta(C)=e$ satisfies $\nu(C)=e+\Upsilon(e)$, and its transpose is the only copula with $\eta=e$ and $\nu=e-\Upsilon(e)$.
\end{theorem}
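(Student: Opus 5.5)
The plan is to reduce the problem to a one-parameter family of linear optimization problems over copulas and solve each one by a Kantorovich duality argument. By transposition symmetry ($\eta(C^\top)=\eta(C)$ and $\nu(C^\top)=2\eta(C)-\nu(C)$), the region is symmetric about the line $n=e$. So it suffices to determine the upper boundary $\sup\{\nu(C):\eta(C)=e\}$ and its unique maximizer; the lower boundary and its unique minimizer then come from transposing. Convexity of $\CC$ and linearity of $\eta,\nu$ make the region convex. Hence, once both boundary curves are shown to be attained, every fibre is the full interval between them, and the set equality follows.

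\textbf{Rewriting as a transport problem.} Writing $\int\!\!\int w(u,v)C(u,v)\de u\de v=\int W(u,v)\de C(u,v)$ with $W(u,v)=\int_u^1\!\int_v^1 w$, the quantity $\nu(C)-e$ at fixed $\eta(C)=e$ becomes $\tfrac12(\nu(C)-\nu(C^\top))$. This is a linear functional $\int h\de C$, where up to constants $h(u,v)$ is a polynomial antisymmetric in $(u,v)$, essentially $uv(u-v)$ times a constant. The constraint $\eta(C)=e$ is $\int g\de C=$ const with $g$ symmetric, essentially $uv(\cdot)$ plus cubic terms. I would introduce a multiplier $\lambda\in\R$ and maximize $\int(h+\lambda g)\de C$ over all copulas, which is an optimal transport problem with uniform marginals.

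\textbf{Upper bound.} For each $e$, the plan is to exhibit potentials $\varphi,\psi$ with $h+\lambda g\le\varphi(u)+\psi(v)$ everywhere. This gives $\nu(C)-e\le\int\varphi+\int\psi-\lambda\cdot(\text{const})=\Upsilon(e)$ for every feasible $C$.

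\textbf{Matching extremizer.} Simultaneously, I would construct a copula supported on finitely many line segments where equality holds and $\eta=e$. The natural guess in the regime $e\ge-\tfrac34$ is a shuffle-type copula: a countermonotone or comonotone piece on a block of size depending on $(1+e)^{1/3}$, glued to a transposed or reflected block. The $(1+e)^{4/3}$ term in \eqref{eq:Upsilon-def} indicates a cube-root scaling parameter optimized by a first-order condition. For $e<-\tfrac34$, the parameter $b$ in \eqref{eq:Upsilon-param} should describe a copula close to $W$ whose support is the graph of a function of $v$ but whose conditional law given $u$ has two atoms, as the abstract says. I would compute $\eta$ and $\nu$ of this family, which presumably is the content of Lemma~\ref{lem:eta-nu-values}, and check that they reproduce $e_b$ and $e_b+\Upsilon(e_b)$.

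\textbf{Uniqueness and the main obstacle.} Any maximizer must be supported in the contact set $\{h+\lambda g=\varphi\oplus\psi\}$. If this set is the graph of a function in one coordinate, the marginal constraint determines the copula uniquely. I expect the main obstacle to be constructing the potentials in the regime $e<-\tfrac34$, where the contact set branches. There, $\varphi$ must be built piecewise, via $c$-transforms of the guessed support, and verifying the global inequality $h+\lambda g\le\varphi\oplus\psi$ requires a case analysis over several rectangles. The first attempt would be to define $\psi(v)=\max_u\,(h+\lambda g-\varphi)$ along the candidate support and reduce the remaining inequality to sign checks of explicit polynomials in $(u,v,b)$. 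Uniqueness in that regime then follows because the contact set is the graph of a function of $v$, and the uniform $v$-marginal pins down the mass along it.
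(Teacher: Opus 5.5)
Your plan is correct and is essentially the paper's proof. The paper maximizes $\nu(C)-\kappa\,\nu(C^\top)=12\,\E[d_\kappa(X,Z)]-2(1-\kappa)$ with $d_\kappa(x,z)=x^2z-\kappa xz^2$ in reflected coordinates, which is your $h+\lambda g$ up to reparametrization. It certifies the graph family $A_w$, $w=(\frac{1+e}{2})^{1/3}$, for $e\ge-\frac34$ and the two-atom family $B_b$, a graph over the second coordinate, for $e<-\frac34$, using explicit Kantorovich potentials checked rectangle by rectangle. Uniqueness comes from the contact sets, the lower boundary from transposition, and the fibres from mixtures of $C^*$ and $C^{*\top}$.

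What your sketch still leaves open is that computational core: the explicit families, the potentials, and the sign checks. Two smaller points also need attention. The paper settles the endpoint fibres $e=\pm1$ by the Fr\'echet--Hoeffding bounds rather than by potentials. And the contact sets are graphs only up to finitely many vertical or horizontal lines; at $e=-\frac34$ a whole horizontal segment has zero slack. Such lines are null for every coupling with uniform marginals, so uniqueness survives.
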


Theorem~\ref{thm:eta-nu} is stated in the coordinates $(\eta,\nu)$ because transposition reflects every fibre about the diagonal there.
Translated to Blest's coefficient and its transpose, it reads
\[
  \bigl\{(\nu(C),\nu(C^\top)):C\in\CC\bigr\}
  =\bigl\{(n,m)\in[-1,1]^2:\ |n-m|\le2\Upsilon\bigl(\tfrac{n+m}{2}\bigr)\bigr\},
\]
see Corollary~\ref{cor:nu-transpose} and Figure~\ref{fig:nu-transpose}.
Again, at each fixed value of $\nu(C)$, unique copulas attain the largest and the smallest value of $\nu(C^\top)$, and part of the boundary takes a closed form: $\nu(C)=n\ge-\frac78$ implies $\nu(C^\top)\le4(\frac{1+n}{2})^{3/4}-n-2$, with equality for exactly one copula.

\begin{figure}[htbp]
  \centering
  \includegraphics[width=0.85\textwidth]{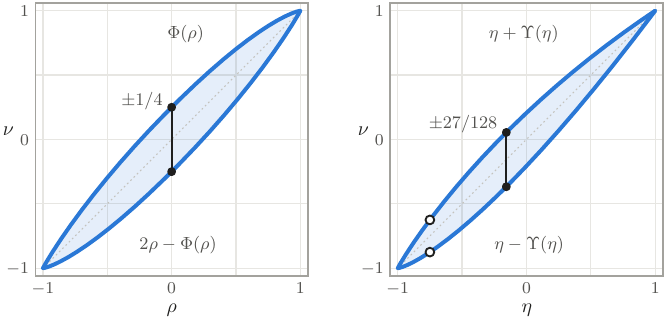}
  \caption{The exact regions of Theorem~\ref{thm:rho-nu} (left) and Theorem~\ref{thm:eta-nu} (right), with the diagonal dotted.
  Each boundary curve is labelled by the function that describes it, and Figures~\ref{fig:rho-extremizers} and~\ref{fig:eta-extremizers} show the copulas attaining it.
  The black segments mark the largest deviations from the diagonal, $|\nu-\rho|=\frac14$ and $|\nu-\eta|=\frac{27}{128}$, and the open circles the change of regime at $\eta=-\frac34$.}
  \label{fig:regions}
\end{figure}

The two theorems yield the sharp inequalities
\[
  |\nu(C)-\rho(C)|\le\frac14
  \qquad\text{and}\qquad
  |\nu(C)-\nu(C^\top)|\le\frac{27}{64},
\]
see Corollaries~\ref{cor:nu-rho} and~\ref{cor:asymmetry}, which also identify the unique extremal copulas.
These inequalities answer questions~\ref{q:weighting} and~\ref{q:transpose}: Blest's coefficient never differs from Spearman's rho by more than $\frac14$, and interchanging the two variables never changes it by more than $\frac{27}{64}$.
The second bound is not a consequence of the first: since $\rho(C^\top)=\rho(C)$ by \eqref{eq:rho-def}, applying the first inequality to $C$ and to $C^\top$ only gives $|\nu(C)-\nu(C^\top)|\le\frac12$.
That the sharp constant is $\frac{27}{64}<\frac12$ means that $\nu(C)-\rho(C)$ and $\nu(C^\top)-\rho(C)$ cannot be extreme in opposite directions at the same time, and determining the constant requires the full $(\eta,\nu)$-region.
Equivalently, by \eqref{eq:eta-def}, $|\nu(C)-\eta(C)|\le\frac{27}{128}$, and a reflection turns this into the further sharp bound $|\rho(C)-\eta(C)|\le\frac{27}{128}$, see Remark~\ref{rem:randomization}~(b).

The two regions call for different arguments, but both rest on the same representation: in the reflected coordinates $(X,Z)=(1-U,1-V)$ of a random vector $(U,V)\sim C$, Blest's coefficient becomes $\nu(C)=12\,\E[X^2Z]-2$, see Lemma~\ref{lem:coupling} in Section~\ref{sec:preliminaries}, so $\nu$, like $\rho$ and $\eta$, is an affine functional of the coupling of two uniform distributions.
At fixed Spearman's rho, the supporting lines of the region amount to ordering a quadratic score, and a rearrangement inequality, together with its equality case, identifies the unique extremizers (Section~\ref{sec:rho-nu}).
At fixed $\eta$, they lead instead to an optimal transport problem whose cost is neither supermodular nor submodular, and we certify two families of extremizers by explicit Kantorovich potentials (Section~\ref{sec:eta-nu}).
This second problem holds a surprise: near countermonotonicity, the upper extremizer is supported on the graph of a function of the second coordinate, but its conditional law given the first coordinate has two atoms on part of the range.
Figures~\ref{fig:rho-extremizers} and~\ref{fig:eta-extremizers} illustrate the supports of the extremizers.

\section{Notation and preliminaries}
\label{sec:preliminaries}

We begin with the representation on which both regions rest and then collect the remaining tools: the behaviour of the coefficients under survival and reflection, a construction of singular copulas from measure-preserving maps that produces every explicit extremizer below, and a rearrangement inequality with equality case.

The representation is formulated in reflected coordinates, in which most computations become more transparent.
Given $(U,V)\sim C$, set $X\coloneqq1-U$ and $Z\coloneqq1-V$.
Both variables are uniform on $[0,1]$, and a large value of $X$ now signals a leading rank of the first variable.
The passage also reverses: every coupling $\gamma$ of two uniform random variables $(X,Z)$ determines one and only one copula in this way.
We may therefore identify $\CC$ with the set $\Gam$ of all such couplings, and we do so whenever convenient.
The class $\CC$ is convex, see \cite[Thm.~1.4.5]{durante2016principles}, so mixtures of copulas are again copulas.
When $(X,Z)\sim\gamma$ for some $\gamma\in\Gam$, we write $\E_\gamma$ for the expectation, and we drop the subscript once the coupling is clear from the context.
Lebesgue measure on $[0,1]$ is denoted by $\mathrm{Leb}$.

\begin{lemma}[Coupling representation]
\label{lem:coupling}
If $(X,Z)$ follows the coupling associated with $C\in\CC$, then
\begin{equation}
  \label{eq:coupling-moments}
  \begin{aligned}
  \rho(C)&=12\,\E[XZ]-3,
  &\nu(C)&=12\,\E[X^2Z]-2,\\
  \nu(C^\top)&=12\,\E[XZ^2]-2,
  &\eta(C)&=6\,\E[XZ(X+Z)]-2.
  \end{aligned}
\end{equation}
Consequently, $\rho$, $\nu$, and $\eta$ depend affinely on the copula.
\end{lemma}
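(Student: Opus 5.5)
The plan is to express each double integral $\iint w(u,v)\,C(u,v)\de u\de v$ as an expectation over $(U,V)\sim C$, and then substitute $U=1-X$, $V=1-Z$. The basic identity is that for $(U,V)\sim C$,
\[
  C(u,v)=\E\bigl[\1\{U\le u\}\1\{V\le v\}\bigr].
\]
By Fubini (the integrand is nonnegative and bounded), this gives
\[
  \int_0^1\!\int_0^1 w(u,v)\,C(u,v)\de u\de v=\E\Bigl[\int_U^1\!\int_V^1 w(u,v)\de u\de v\Bigr].
\]
For the weights that occur, $w$ is a product of a function of $u$ and a function of $v$, so the inner integral factorizes.

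For $\rho$, take $w\equiv1$. The inner integral is $(1-U)(1-V)=XZ$, so \eqref{eq:rho-def} becomes $\rho(C)=12\,\E[XZ]-3$.

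For $\nu$, take $w(u,v)=1-u$. Then
\[
  \int_U^1(1-u)\de u=\tfrac12(1-U)^2=\tfrac12X^2,
  \qquad
  \int_V^1\de v=Z,
\]
so \eqref{eq:nu-def} gives $\nu(C)=24\cdot\tfrac12\,\E[X^2Z]-2=12\,\E[X^2Z]-2$.

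For $\nu(C^\top)$, note that $(V,U)\sim C^\top$. Its reflected coordinates are therefore $(Z,X)$, and applying the formula for $\nu$ to this pair yields $\nu(C^\top)=12\,\E[XZ^2]-2$.

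For $\eta$, use the definition \eqref{eq:eta-def} as the average of $\nu(C)$ and $\nu(C^\top)$:
\[
  \eta(C)=6\,\E[X^2Z+XZ^2]-2=6\,\E[XZ(X+Z)]-2.
\]
As a cross-check, one can integrate the weight $2-u-v$ directly and should obtain the same expression.

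Affinity follows once we identify $C$ with its coupling $\gamma\in\Gam$. Each of $\rho$, $\nu$ and $\eta$ is then the integral of a fixed bounded continuous function against $\gamma$, plus a constant. A mixture $\lambda C_1+(1-\lambda)C_2$ corresponds to the mixture of the couplings, so each coefficient is affine in the copula. Equally directly, \eqref{eq:nu-def}--\eqref{eq:eta-def} are integrals that are linear in $C$, plus constants.

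The only step needing care is the Fubini interchange and the orientation of the indicator limits, i.e.\ getting $\int_U^1$ rather than $\int_0^U$. This is routine, and I expect no real obstacle.
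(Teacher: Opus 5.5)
Your proposal is correct and follows essentially the same route as the paper: you write $C(u,v)$ as an expectation of indicators, apply Fubini to obtain $\E[XZ]$ and $\tfrac12\E[X^2Z]$, get $\nu(C^\top)$ by swapping the roles of the two coordinates, and obtain $\eta$ by averaging. Your explicit justification of affinity through mixtures of couplings is a harmless addition to what the paper leaves implicit.
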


\begin{proof}
The starting point is the identity $C(u,v)=\E[\1\{U\le u\}\1\{V\le v\}]$.
Fubini's theorem then yields
\[
  \int_0^1\!\int_0^1C(u,v)\de u\de v
  =\E[(1-U)(1-V)]=\E[XZ]
\]
and
\[
  \int_0^1\!\int_0^1(1-u)C(u,v)\de u\de v
  =\E\Bigl[\int_U^1(1-u)\de u\int_V^1\de v\Bigr]
  =\frac12\,\E[X^2Z].
\]
Together with \eqref{eq:rho-def} and \eqref{eq:nu-def}, these equations prove the first pair of identities, and the formula for $\nu$ is equivalent to \cite[Frm.~(1)]{genest2003blest}.
The third identity follows because transposition exchanges the coupling of $(X,Z)$ for that of $(Z,X)$.
Averaging the two formulas for $\nu$ and invoking \eqref{eq:eta-def} then yields the representation of $\eta$.
\end{proof}

Next to the transpose $C^\top$ from the introduction, two further transformations of a copula appear throughout, and the next lemma records how the coefficients react to them.
For $(U,V)\sim C$, the survival copula $\widehat C(u,v)\coloneqq u+v-1+C(1-u,1-v)$ is the copula of $(1-U,1-V)$, and the reflection $C^\perp(u,v)\coloneqq u-C(u,1-v)$ in the second argument is the copula of $(U,1-V)$.

\begin{lemma}[Two symmetries]
\label{lem:symmetries}
For every $C\in\CC$,
\begin{equation}
  \label{eq:survival-transform}
  \rho(\widehat C)=\rho(C),\qquad
  \nu(\widehat C)=2\rho(C)-\nu(C),\qquad
  \eta(\widehat C)=2\rho(C)-\eta(C),
\end{equation}
and
\begin{equation}
  \label{eq:reflection-transform}
  \rho(C^\perp)=-\rho(C),\qquad
  \nu(C^\perp)=-\nu(C).
\end{equation}
\end{lemma}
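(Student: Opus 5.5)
The plan is to translate each transformation into the reflected coordinates of Lemma~\ref{lem:coupling} and read off the moments. If $(U,V)\sim C$ has reflected coordinates $(X,Z)=(1-U,1-V)$, then the survival copula $\widehat C$ is the copula of $(1-U,1-V)=(X,Z)$. Its reflected coordinates are therefore $(1-X,1-Z)=(U,V)$. Likewise, $C^\perp$ is the copula of $(U,1-V)$, whose reflected coordinates are $(1-U,V)=(X,1-Z)$. Each identity then reduces to expanding a polynomial moment and using the uniform moments $\E[X]=\E[Z]=\tfrac12$, $\E[X^2]=\E[Z^2]=\tfrac13$, $\E[X^3]=\tfrac14$.

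\textbf{Survival identities.} I will compute the moments with $(X,Z)$ replaced by $(1-X,1-Z)$.

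For $\rho$, note that $\E[(1-X)(1-Z)]=1-1+\E[XZ]=\E[XZ]$, so $\rho(\widehat C)=\rho(C)$.

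For $\nu$, I expand
\[
(1-X)^2(1-Z)=(1-2X+X^2)(1-Z)=1-Z-2X+2XZ+X^2-X^2Z.
\]
Taking expectations gives $1-\tfrac12-1+2\E[XZ]+\tfrac13-\E[X^2Z]=-\tfrac16+2\E[XZ]-\E[X^2Z]$. Hence
\[
\nu(\widehat C)=12\bigl(-\tfrac16+2\E[XZ]-\E[X^2Z]\bigr)-2=-4+24\E[XZ]-12\E[X^2Z].
\]
On the other hand, $2\rho(C)-\nu(C)=24\E[XZ]-6-12\E[X^2Z]+2$, which is the same expression, so $\nu(\widehat C)=2\rho(C)-\nu(C)$.

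For $\eta$, I apply the $\nu$ identity to $C^\top$, whose survival copula is $\widehat{C}^\top$. Together with $\rho(C^\top)=\rho(C)$, this gives $\nu(\widehat C^\top)=2\rho(C)-\nu(C^\top)$. Averaging with the identity for $C$ then yields $\eta(\widehat C)=2\rho(C)-\eta(C)$.

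\textbf{Reflection identities.} Here I replace $(X,Z)$ by $(X,1-Z)$.

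For $\rho$, we have $\E[X(1-Z)]=\tfrac12-\E[XZ]$, so $\rho(C^\perp)=6-12\E[XZ]-3=-\rho(C)$.

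For $\nu$, we have $\E[X^2(1-Z)]=\tfrac13-\E[X^2Z]$, so $\nu(C^\perp)=4-12\E[X^2Z]-2=-\nu(C)$.

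No reflection identity is claimed for $\eta$, and indeed the reflection mixes the two weights asymmetrically, so $\eta$ admits none.

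\textbf{Remaining check.} The only real obstacle is making sure the correspondence between a copula and its coupling is applied in the correct direction. In particular, I must confirm that $u-C(u,1-v)$ is the distribution function of $(U,1-V)$: indeed $\Prob(U\le u,1-V\le v)=u-\Prob(U\le u,V<1-v)$, and continuity of $C$ lets us replace the strict inequality by $\le$. Similarly, the formula for $\widehat C$ is the distribution function of $(1-U,1-V)$ by inclusion–exclusion. Once these are settled, the rest is the bookkeeping above.
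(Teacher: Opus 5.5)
Your proof is correct, but it takes a different route from the paper's. The paper computes nothing here. It takes the $\rho$ identities as standard, citing Scarsini. It cites Genest and Plante for $\nu(C^\perp)=-\nu(C)$ and for the fact that reflecting the first coordinate sends $\nu(C)$ to $\nu(C)-2\rho(C)$. It then obtains $\nu(\widehat C)=2\rho(C)-\nu(C)$ by composing these two reflections. You instead derive every identity from Lemma~\ref{lem:coupling} alone. You identify the reflected coordinates of $\widehat C$ and $C^\perp$ as $(1-X,1-Z)$ and $(X,1-Z)$ and expand low-degree moments, and your arithmetic checks out, e.g.\ $\E[(1-X)^2(1-Z)]=-\frac16+2\E[XZ]-\E[X^2Z]$. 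Your $\eta$ step is the same as the paper's: survival commutes with transposition, so you apply the $\nu$ identity to $C$ and $C^\top$ and average. The paper's version is shorter, anchors the identities in the literature, and shows survival as a composition of two coordinate reflections. Yours is self-contained and verifies the cited formulas directly. It is also the same bookkeeping the paper itself uses to obtain \eqref{eq:eta-reflection}. One side remark is loosely phrased: $\eta$ does not ``admit none'' under reflection. Substituting $(X,Z)\mapsto(X,1-Z)$ into $6\E[XZ(X+Z)]-2$ gives exactly \eqref{eq:eta-reflection}, i.e.\ $\eta(C^\perp)=\eta(C)-\rho(C)-\nu(C)$. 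What fails is only that $\eta(C^\perp)$ is not a function of $\rho(C)$ and $\eta(C)$ alone. This aside does not affect the proof.
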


\begin{proof}
For the measure of concordance $\rho$, the stated transformations are standard, see \cite{scarsini1984measures}.
Genest and Plante prove $\nu(C^\perp)=-\nu(C)$ in Section~3 of \cite{genest2003blest}, where they also show that reflection in the first coordinate sends $\nu(C)$ to $\nu(C)-2\rho(C)$, see \cite[Frm.~(5)]{genest2003blest}.
Composing the two reflections gives $\nu(\widehat C)=-\{\nu(C)-2\rho(C)\}$.
For $\eta(\widehat C)$, note that survival commutes with transposition.
Applying the previous identity to $C$ and to $C^\top$ and averaging the results by \eqref{eq:eta-def}, using $\rho(C^\top)=\rho(C)$, establishes the formula.
\end{proof}

Reflection in one coordinate acts on $\eta$ in a different way: rather than a sign change, \eqref{eq:coupling-moments} yields
\begin{equation}
  \label{eq:eta-reflection}
  \eta(C^\perp)=-\rho(C)+\frac{\nu(C^\top)-\nu(C)}{2},
\end{equation}
which can also be deduced from \cite[Sec.~4]{genest2003blest}.
The reflection that negates $(\rho,\nu)$ thus does not negate $(\eta,\nu)$, and in contrast to its $(\rho,\nu)$-counterpart, the $(\eta,\nu)$-region is not symmetric about the origin.

All explicit extremizers below come from a single construction: take a Borel measurable $T:[0,1]\to[0,1]$ that preserves Lebesgue measure, meaning $\mathrm{Leb}(T^{-1}(B))=\mathrm{Leb}(B)$ for every Borel set $B\subseteq[0,1]$, and let $X$ and $Z$ be uniformly distributed on $[0,1]$.
Two couplings result: $\gamma_T\in\Gam$, the law of $(X,T(X))$, and $\gamma^T\in\Gam$, the law of $(T(Z),Z)$, and their copulas are transposes of one another.
Both couplings are singular, as $\gamma_T$ lives on the graph of $T$ read as a function of the first coordinate and $\gamma^T$ on the same graph read as a function of the second.
When a piecewise linear $T$ fails to be injective, the conditional distribution of $Z$ given $X$ under $\gamma^T$ may carry several atoms.
Changing $T$ at finitely many breakpoints does not change its copula, and the support figures show the closures of the linear pieces.

One tool remains: the rearrangement inequality of Hardy and Littlewood, see \cite[Ch.~X]{hardy1952inequalities}, in a probabilistic form that carries the equality case with it.
We know of no convenient reference for that case, so we include the short proof.

\begin{lemma}[Rearrangement inequality with equality case]
\label{lem:rearrangement}
Let $G$ be an integrable random variable with continuous distribution function $F$, and let $Z$ be uniformly distributed on $[0,1]$ and defined on the same probability space.
Then $\E[GZ]\le\E[G\,F(G)]$, with equality if and only if $Z=F(G)$ almost surely.
\end{lemma}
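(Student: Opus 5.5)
The plan is to compare $Z$ with the uniform variable $W\coloneqq F(G)$. Since $F$ is continuous, $W$ is uniform on $[0,1]$, so $Z$ and $W$ have the same law. This gives $\E[Z]=\E[W]$ and, more generally, $\E[\psi(Z)]=\E[\psi(W)]$ for every bounded Borel $\psi$.

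The first step is a layer-cake representation. For $z\in[0,1]$ write $z=\int_0^1\1\{t<z\}\de t$. Fubini then gives
\[
  \E[GZ]=\int_0^1\E\bigl[G\,\1\{Z>t\}\bigr]\de t,
  \qquad
  \E[GW]=\int_0^1\E\bigl[G\,\1\{W>t\}\bigr]\de t .
\]
Integrability of $G$ justifies Fubini, since $|GZ|\le|G|$. It therefore suffices to show, for each fixed $t$, that $\E[G\1_A]\le\E[G\1_B]$, where $A\coloneqq\{Z>t\}$ and $B\coloneqq\{W>t\}$. Both events have probability $1-t$.

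The second step is a pointwise exchange argument. Choose $q_t$ with $F(q_t)=t$, so that $B=\{G>q_t\}$ up to a null set; this uses continuity of $F$, since $\{F(G)>t\}$ and $\{G>q_t\}$ differ only where $F$ is flat, which has probability zero. Then
\[
  \E[G\1_B]-\E[G\1_A]=\E\bigl[(G-q_t)(\1_B-\1_A)\bigr]+q_t\bigl(\Prob(B)-\Prob(A)\bigr).
\]
The last term vanishes. The integrand of the first term is nonnegative almost surely: on $B$ we have $G>q_t$ and $\1_B-\1_A\ge0$, while off $B$ we have $G\le q_t$ and $\1_B-\1_A\le0$. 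Integrating over $t$ gives the inequality.

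The equality case is the delicate part. If equality holds, the nonnegative function $t\mapsto\E[(G-q_t)(\1_B-\1_A)]$ integrates to zero, so it vanishes for almost every $t$. Its integrand is then zero almost surely, which means $A\triangle B\subseteq\{G=q_t\}$ up to a null set. Because $F$ is continuous, $\Prob(G=q_t)=0$, hence $\{Z>t\}=\{W>t\}$ almost surely for almost every $t$. Taking $t$ in a countable dense set of such good values yields $Z=W$ almost surely, since $\{Z\neq W\}$ is covered by the countable union $\bigcup_t\{Z>t\}\triangle\{W>t\}$ over rational good $t$. The converse is immediate.

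The main obstacle is handling the quantile choice when $F$ has flat pieces. I would take $q_t=\inf\{x:F(x)\ge t\}$ and check that $\{F(G)>t\}=\{G>q_t\}$ almost surely, which holds because $G$ lies in a flat stretch of $F$ with probability zero.
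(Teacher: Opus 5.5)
Your proof is correct and follows the paper's argument essentially step for step: the layer-cake/Fubini reduction to a fixed level $t$, the exchange estimate $\E[(G-q_t)(\1_B-\1_A)]\ge0$ at a quantile $q_t$, and the use of $\Prob(G=q_t)=0$ for the equality case. The paper takes $q_t=\sup\{s:F(s)\le t\}$, which makes $\{F(G)>t\}=\{G>q_t\}$ hold exactly rather than only almost surely. It also finishes via $\E|Z-F(G)|=\int_0^1\Prob(\{Z>t\}\triangle\{F(G)>t\})\de t=0$ instead of a countable dense set of good $t$. If you keep your version, note that the good set need not contain any rationals, so you should take a countable dense subset of the good set rather than ``rational good $t$''.
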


\begin{proof}
Fix $t\in(0,1)$ and set $q_t\coloneqq\sup\{s\in\R:F(s)\le t\}$.
The function $F$ is continuous and nondecreasing, so $F(q_t)=t$ and $\{G>q_t\}=\{F(G)>t\}$, an event of probability $1-t$.
Now take any event $H$ with $\Prob(H)=1-t$.
The indicators $\1_H$ and $\1\{G>q_t\}$ then share the same expectation, and
\[
  \E[G\1_H]-\E\bigl[G\1\{F(G)>t\}\bigr]
  =\E\bigl[(G-q_t)\bigl(\1_H-\1\{G>q_t\}\bigr)\bigr]
  \le0,
\]
because the integrand is nonpositive on $\{G>q_t\}$ and on its complement alike.
As $\Prob(G=q_t)=0$, equality is equivalent to $\1_H=\1\{G>q_t\}$ almost surely.
Now choose $H=\{Z>t\}$ and integrate over $t$.
Since $\E|G|<\infty$, Fubini's theorem applies and gives
\[
  \E[GZ]=\int_0^1\E\bigl[G\1\{Z>t\}\bigr]\de t
  \le\int_0^1\E\bigl[G\1\{F(G)>t\}\bigr]\de t
  =\E[G\,F(G)].
\]
Equality holds precisely when $\1\{Z>t\}=\1\{F(G)>t\}$ almost surely for almost every $t$, in other words when $\E|Z-F(G)|=\E\int_0^1|\1\{Z>t\}-\1\{F(G)>t\}|\de t=0$.
\end{proof}

\section{\texorpdfstring{The exact $(\rho,\nu)$-region}{The exact (rho,nu)-region}}
\label{sec:rho-nu}

To answer question~\ref{q:weighting}, we hold Spearman's rho fixed and ask how far the emphasis on the leading ranks can move Blest's coefficient.
Lemma~\ref{lem:coupling} translates the setting: prescribing $\rho(C)$ prescribes $\E[XZ]$, and $\nu(C)$ is determined by $\E[X^2Z]$, so the two coefficients differ only in how they weight the first coordinate.
The supporting lines of the region thereby turn into rearrangement problems, which Lemma~\ref{lem:rearrangement} solves, equality cases included.
This yields the upper extremizers directly, survival reflection via Lemma~\ref{lem:symmetries} then produces the lower ones, and maximizing the distance between boundary and diagonal gives the sharp deviation $\frac14$.

The moment identities in \eqref{eq:coupling-moments} give, for any $\lambda\in\R$,
\begin{equation}
  \label{eq:rho-nu-support}
  \nu(C)-\lambda\rho(C)=12\,\E\bigl[(X^2-\lambda X)Z\bigr]-2+3\lambda.
\end{equation}
Each supporting-line problem for the $(\rho,\nu)$-region therefore asks for the maximum of $\E[g(X)Z]$ with $g(x)=x^2-\lambda x=(x+c-1)^2-(1-c)^2$ and $c\coloneqq1-\lambda/2$.
Lemma~\ref{lem:rearrangement} points the way: couple $Z$ comonotonically with $(X+c-1)^2$, so that the largest values of $Z$ meet the values of $X$ lying farthest from $1-c$.
For $c\in[0,1]$, the corresponding rank map $\zeta_c:[0,1]\to[0,1]$ reads
\begin{equation}
  \label{eq:Zc-def}
  \begin{aligned}
  \zeta_c(x)&\coloneqq\mathrm{Leb}\bigl(\{y\in[0,1]:|y+c-1|\le|x+c-1|\}\bigr)\\
  &=
  \begin{cases}
    1-x, & x\le1-2c,\\
    2|x+c-1|, & |x+c-1|\le\min\{c,1-c\},\\
    x, & x\ge2-2c.
  \end{cases}
  \end{aligned}
\end{equation}
The three cases cover the whole interval, with the first contributing positive length only for $c<\frac12$ and the last only for $c>\frac12$.
Piecewise linear and Lebesgue measure preserving, the map $\zeta_c$ satisfies $\zeta_0=1-\mathrm{id}$, $\zeta_{1/2}(x)=|2x-1|$, and $\zeta_1=\mathrm{id}$.
Let $D_c$ denote the copula associated with the coupling $\gamma_{\zeta_c}$ from Section~\ref{sec:preliminaries}.
Then $D_0=W$ and $D_1=M$, and for $0<c<1$, the support of $D_c$ in the original coordinates $(u,v)=(1-x,1-z)$ has its kink at the point $(c,1)$.
Three of these copulas appear in Figure~\ref{fig:rho-extremizers}, next to their survival copulas, which attain the lower boundary at the same value of $\rho$.

\begin{figure}[htbp]
  \centering
  \includegraphics[width=0.8\textwidth]{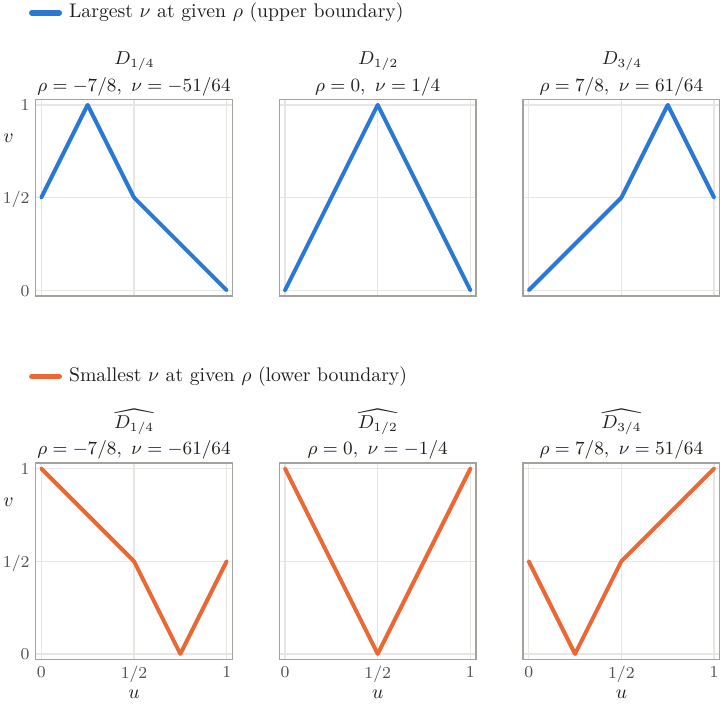}
  \caption{Supports of the extremizers of the $(\rho,\nu)$-region for $c=1/4,1/2,3/4$ from left to right, drawn in the original coordinates $(u,v)=(1-x,1-z)$.
  The top row shows the upper extremizers $D_c$, and the bottom row shows their survival copulas $\widehat{D_c}$, which are the lower extremizers at the same value of $\rho$.
  Each panel states the values of $\rho$ and $\nu$, and the centre column realizes the sharp values $\nu-\rho=\pm1/4$.
  The mass is distributed uniformly along $u$.}
  \label{fig:rho-extremizers}
\end{figure}

\begin{lemma}[Coefficients along the family $D_c$]
\label{lem:rho-nu-values}
For every $c\in[0,1]$,
\begin{equation}
  \label{eq:rho-nu-values}
  \begin{aligned}
  \rho(D_c)&=
  \begin{cases}
    8c^3-1, & c\le\frac12,\\
    1-8(1-c)^3, & c\ge\frac12,
  \end{cases}\\[0.5ex]
  \nu(D_c)&=
  \begin{cases}
    16c^3-12c^4-1, & c\le\frac12,\\
    1-12(1-c)^4, & c\ge\frac12.
  \end{cases}
  \end{aligned}
\end{equation}
Hence $c\mapsto\rho(D_c)$ increases bijectively from $[0,1]$ onto $[-1,1]$, and $\nu(D_c)=\Phi(\rho(D_c))$ for the function $\Phi$ in \eqref{eq:Phi-def}.
\end{lemma}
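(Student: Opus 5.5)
The plan is a direct computation. Lemma~\ref{lem:coupling} gives $\rho(D_c)=12\,\E[X\zeta_c(X)]-3$ and $\nu(D_c)=12\,\E[X^2\zeta_c(X)]-2$ with $X$ uniform on $[0,1]$. So I only need the integrals $I_k(c)\coloneqq\int_0^1x^k\zeta_c(x)\de x$ for $k=1,2$, evaluated piece by piece from \eqref{eq:Zc-def}.

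To organise this, write $m\coloneqq1-c$ and $w\coloneqq\min\{c,1-c\}$. The middle piece is the V-shape $2|x-m|$ on $[m-w,m+w]$. Substituting $t=x-m$ and using that odd powers of $t$ integrate to zero against $|t|$, I expect
\[
\int_{m-w}^{m+w}x\cdot2|x-m|\de x=2mw^2,
\qquad
\int_{m-w}^{m+w}x^2\cdot2|x-m|\de x=2m^2w^2+w^4.
\]

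For $c\le\frac12$ we have $w=c$. The remaining piece is $\zeta_c(x)=1-x$ on $[0,1-2c]$, and its contributions $\int_0^{a}x^k(1-x)\de x$ with $a=1-2c$ are elementary polynomials. Adding the two pieces and simplifying should give $12I_1-3=8c^3-1$ and $12I_2-2=16c^3-12c^4-1$.

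For $c\ge\frac12$ we have $w=1-c$. The remaining piece is $\zeta_c(x)=x$ on $[2-2c,1]$, contributing $\int_{2-2c}^1x^{k+1}\de x$. Simplifying should give $1-8(1-c)^3$ and $1-12(1-c)^4$. As checks, both branches must agree at $c=\frac12$ (with $\rho=0$ and $\nu=\frac14$), and the endpoints must reproduce $\rho(W)=\nu(W)=-1$ and $\rho(M)=\nu(M)=1$.

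Each branch of $c\mapsto\rho(D_c)$ is continuous and strictly increasing, namely $8c^3-1$ and $1-8(1-c)^3$. The two branches meet at $0$ when $c=\frac12$, and the endpoints map to $-1$ and $1$. Hence the map is an increasing bijection of $[0,1]$ onto $[-1,1]$.

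For the identity with $\Phi$, set $r=\rho(D_c)$.
\begin{itemize}[leftmargin=2em]
\item If $c\le\frac12$, then $1+r=8c^3$, so $(1+r)^{4/3}=16c^4$. Hence $2r+1-\frac34(1+r)^{4/3}=16c^3-1-12c^4$.
\item If $c\ge\frac12$, then $1-r=8(1-c)^3$, so $1-\frac34(1-r)^{4/3}=1-12(1-c)^4$.
\end{itemize}
The sign of $r$ matches the case split in \eqref{eq:Phi-def}, since $r\le0$ if and only if $c\le\frac12$.

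The only real obstacle is the bookkeeping of the polynomial algebra in the case $c\le\frac12$. There the contribution of the piece $1-x$ on $[0,1-2c]$ produces many terms that must cancel down to the compact form $16c^3-12c^4-1$. I would carry this out by expanding in powers of $c$ and cross-checking with the endpoint values at $c=0$ and $c=\frac12$.
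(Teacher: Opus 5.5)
Your proposal is correct and follows the paper's proof: integrate $x\zeta_c(x)$ and $x^2\zeta_c(x)$ over the linear pieces of \eqref{eq:Zc-def}, apply the coupling representation, and then eliminate $c$ via $(1+\rho)^{4/3}=16c^4$ or $(1-\rho)^{4/3}=16(1-c)^4$. Your substitution $t=x-m$ for the V-shaped piece is a tidy way to organize the same computation. The intermediate values you predict, $2mw^2$ and $2m^2w^2+w^4$, do simplify together with the outer pieces to the stated formulas in both cases.
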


\begin{proof}
Formula \eqref{eq:rho-nu-values} follows by integrating $x\zeta_c(x)$ and $x^2\zeta_c(x)$ over the linear pieces in \eqref{eq:Zc-def} and applying \eqref{eq:coupling-moments}.
Now write $\rho=\rho(D_c)$ and $\nu=\nu(D_c)$.
For $c\le\frac12$, eliminate $c$ through $(1+\rho)^{4/3}=(8c^3)^{4/3}=16c^4$ to obtain $\nu=2\rho+1-\frac34(1+\rho)^{4/3}$.
For $c\ge\frac12$, the identity $(1-\rho)^{4/3}=16(1-c)^4$ instead leads to $\nu=1-\frac34(1-\rho)^{4/3}$.
\end{proof}

\begin{proof}[Proof of Theorem~\ref{thm:rho-nu}]
Fix $C\in\CC$ with $\rho(C)=r\in[-1,1]$, and take the unique $c\in[0,1]$ with $\rho(D_c)=r$ from Lemma~\ref{lem:rho-nu-values}.
Consider $G\coloneqq(X+c-1)^2$.
This random variable is bounded, and its distribution function $F$ is continuous, since each level set of $x\mapsto(x+c-1)^2$ contains at most two points.
Equation \eqref{eq:Zc-def} also gives $F(G)=\zeta_c(X)$.
Lemma~\ref{lem:rearrangement}, applied to the coupling of $C$, therefore yields
\[
  \E\bigl[(X+c-1)^2Z\bigr]\le\E\bigl[(X+c-1)^2\zeta_c(X)\bigr]=\E_{\gamma_{\zeta_c}}\bigl[(X+c-1)^2Z\bigr],
\]
and equality holds if and only if $Z=\zeta_c(X)$ almost surely, which is to say if and only if $C=D_c$.
Now $x^2-2(1-c)x=(x+c-1)^2-(1-c)^2$, and $\E[Z]=\frac12$ for every coupling, so \eqref{eq:rho-nu-support} with $\lambda=2(1-c)$ shows that $\nu(C)-2(1-c)\rho(C)$ and $12\,\E[(X+c-1)^2Z]$ differ by a constant that does not depend on $C$.
Because $\rho(C)=\rho(D_c)=r$, it follows that $\nu(C)\le\nu(D_c)=\Phi(r)$ by Lemma~\ref{lem:rho-nu-values}, with equality precisely when $C=D_c$.

The survival copula $\widehat C$ satisfies $\rho(\widehat C)=r$ as well, by \eqref{eq:survival-transform}.
Feeding $\widehat C$ into the upper bound and using \eqref{eq:survival-transform} once more gives $\nu(C)=2r-\nu(\widehat C)\ge2r-\Phi(r)$, with equality if and only if $\widehat C=D_c$.
The map $C\mapsto\widehat C$ is an involution of $\CC$, so the lower bound is attained by the survival copula $\widehat{D_c}$ and by no other copula.
It remains to fill the fibre: every mixture $(1-t)D_c+t\widehat{D_c}$ with $t\in[0,1]$ has Spearman's rho $r$ by \eqref{eq:survival-transform}, and as $\nu$ is affine by Lemma~\ref{lem:coupling}, these mixtures run through every value in $[2r-\Phi(r),\Phi(r)]$.
\end{proof}

\begin{corollary}[Sharp deviation between $\nu$ and $\rho$]
\label{cor:nu-rho}
Every $C\in\CC$ satisfies $|\nu(C)-\rho(C)|\le\frac14$.
The maximum $\nu-\rho=\frac14$ is attained only by $D_{1/2}$, which has $\rho=0$ and is supported on the graph of the tent map $u\mapsto\min\{2u,2-2u\}$.
The minimum $\nu-\rho=-\frac14$ is attained only by its survival copula, which is supported on the graph of $u\mapsto|2u-1|$.
\end{corollary}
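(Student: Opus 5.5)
By Theorem~\ref{thm:rho-nu}, every $C\in\CC$ with $\rho(C)=r$ satisfies $2r-\Phi(r)\le\nu(C)\le\Phi(r)$, so $\nu(C)-\rho(C)\le\Phi(r)-r$ and $\rho(C)-\nu(C)\le\Phi(r)-r$. The plan is therefore to maximize $h(r)\coloneqq\Phi(r)-r$ over $[-1,1]$. Using the identity $2r-\Phi(r)=-\Phi(-r)$, the lower deviation $r-(2r-\Phi(r))$ equals $\Phi(r)-r$ as well. It is convenient to pass to the parameter $c$ of Lemma~\ref{lem:rho-nu-values}, where $h(\rho(D_c))=\nu(D_c)-\rho(D_c)$.

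For $c\le\frac12$ this quantity equals $16c^3-12c^4-1-(8c^3-1)=8c^3-12c^4$. Its derivative $24c^2-48c^3=24c^2(1-2c)$ is nonnegative on $[0,\frac12]$ and vanishes only at $c=0$ and $c=\frac12$. Hence the function increases strictly to the value $1-\frac34=\frac14$ at $c=\frac12$.

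For $c\ge\frac12$ the quantity is $1-12(1-c)^4-1+8(1-c)^3=8s^3-12s^4$ with $s=1-c\in[0,\frac12]$. The same computation shows it is strictly increasing in $s$, so it attains the maximum $\frac14$ only at $s=\frac12$. Thus $h\le\frac14$ on $[-1,1]$, with equality only at $r=\rho(D_{1/2})=0$, and both inequalities follow.

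For the equality cases, $\nu(C)-\rho(C)=\frac14$ forces $\rho(C)=0$, because $h(\rho(C))<\frac14$ otherwise. It also forces $\nu(C)=\Phi(0)=\frac14$. By the uniqueness part of Theorem~\ref{thm:rho-nu}, this gives $C=D_{1/2}$.

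Symmetrically, $\nu-\rho=-\frac14$ forces $\rho=0$ and $\nu=2\cdot0-\Phi(0)$, which by the theorem gives $C=\widehat{D_{1/2}}$.

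It remains to describe the supports, using $\zeta_{1/2}(x)=|2x-1|$ and $(u,v)=(1-x,1-z)$. For $D_{1/2}$ we get $v=1-|1-2u|$, which is $\min\{2u,2-2u\}$. The survival transformation $(u,v)\mapsto(1-u,1-v)$ maps this graph to $v=1-\min\{2-2u,2u\}$, which equals $|2u-1|$.

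The only step needing care is checking that the maximum is unique across both branches. The factorization $24c^2(1-2c)$ settles this, so no real obstacle is expected.
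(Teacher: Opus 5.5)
Your proof is correct and follows the paper's argument. You bound $|\nu-\rho|$ by $\Phi(r)-r$ via Theorem~\ref{thm:rho-nu}, show this gap is maximized only at $r=0$ with value $\frac14$, and get the unique extremizers and their supports from the theorem's uniqueness together with $\zeta_{1/2}(x)=|2x-1|$. The differences are cosmetic: you maximize the polynomial $8c^3-12c^4$ in the parameter $c$ of Lemma~\ref{lem:rho-nu-values} rather than differentiating $\Phi(r)-r$ in $r$, and you read off the lower equality case from the theorem's lower boundary rather than from the survival symmetry.
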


\begin{proof}
On $[-1,0]$, \eqref{eq:Phi-def} gives $\Phi(r)-r=r+1-\frac34(1+r)^{4/3}$, and its derivative $1-(1+r)^{1/3}$ is positive for $r<0$.
On $[0,1]$, $\Phi(r)-r=1-r-\frac34(1-r)^{4/3}$, and its derivative $(1-r)^{1/3}-1$ is negative for $r>0$.
Consequently $\Phi(r)-r\le\Phi(0)=\frac14$, with equality only at $r=0$.
Lemma~\ref{lem:rho-nu-values} matches $r=0$ with $c=\frac12$, so by Theorem~\ref{thm:rho-nu}, no copula other than $D_{1/2}$ attains the maximum.
Its coupling $\gamma_{\zeta_{1/2}}$ sits on the graph of $\zeta_{1/2}(x)=|2x-1|$ from \eqref{eq:Zc-def}, which becomes the graph of the tent map in the coordinates $(u,v)=(1-x,1-z)$.
For the minimum, \eqref{eq:survival-transform} sends $\nu-\rho$ to $\rho-\nu$, so only $\widehat{D_{1/2}}$ attains it: this is the copula of $(X,\zeta_{1/2}(X))$, supported on the graph of $u\mapsto|2u-1|$.
\end{proof}

The region carries two symmetries worth recording.
First, the map $C\mapsto C^\perp$ sends $(\rho,\nu)$ to $(-\rho,-\nu)$ by \eqref{eq:reflection-transform}, so the region is symmetric about the origin, and the identity $2r-\Phi(r)=-\Phi(-r)$ encodes this symmetry.
Second, by \eqref{eq:survival-transform}, survival leaves $\rho$ unchanged while sending $\nu$ to $2\rho-\nu$, which makes each fibre symmetric about the diagonal.

\section{\texorpdfstring{The exact $(\eta,\nu)$-region}{The exact (eta,nu)-region}}
\label{sec:eta-nu}

We now turn to question~\ref{q:transpose}, which concerns the directional information that symmetrization discards.
At a fixed value of $\eta$, the length of the fibre equals the largest possible asymmetry $|\nu(C)-\nu(C^\top)|$, and two extremal families are needed to attain it: for $\eta\in[-\frac34,1]$, the upper extremizer is supported on a graph over the first coordinate, whereas for $\eta\in(-1,-\frac34)$ its conditional law given the first coordinate carries two atoms on part of the range.
Section~\ref{sec:eta-families} constructs both families and computes their coefficients, Section~\ref{sec:eta-dual} certifies their optimality and uniqueness through explicit transport potentials, and Section~\ref{sec:eta-proof} derives the region, the sharp asymmetry bound, and the exact region of the pair $(\nu(C),\nu(C^\top))$.

By \eqref{eq:eta-def}, transposition sends $(\eta,\nu)$ to $(\eta,2\eta-\nu)$, so every fibre of the $(\eta,\nu)$-region is symmetric about the diagonal, and the quantity $\nu(C)-\eta(C)=\{\nu(C)-\nu(C^\top)\}/2$ measures the asymmetry of Blest's coefficient at $C$.
To describe the supporting lines of the region, fix $\kappa>0$ and define
\[
  d_\kappa(x,z)\coloneqq x^2z-\kappa\,xz^2.
\]
Then \eqref{eq:eta-def} and Lemma~\ref{lem:coupling} give
\begin{equation}
  \label{eq:eta-nu-support}
  \begin{aligned}
  (1+\kappa)\,\nu(C)-2\kappa\,\eta(C)
  &=\nu(C)-\kappa\,\nu(C^\top)\\
  &=12\,\E\bigl[d_\kappa(X,Z)\bigr]-2(1-\kappa).
  \end{aligned}
\end{equation}
The supporting lines of the region thus come from maximizing $\E[d_\kappa(X,Z)]$ over $\Gam$, an optimal transport problem for the cost $-d_\kappa$.
The same reduction recently determined the exact region of Spearman's rho and Spearman's footrule, and \cite{ansari2026rhofootrule} gives a more detailed account of this approach and of its dual certificates.
One feature separates the present problem from Section~\ref{sec:rho-nu}: the function $d_\kappa$ is quadratic in each coordinate, and its mixed derivative $\partial_x\partial_zd_\kappa(x,z)=2(x-\kappa z)$ changes sign along the ray $x=\kappa z$, so $d_\kappa$ is neither supermodular nor submodular and no single rearrangement settles the problem.
Instead, we construct Kantorovich potentials, see \cite[Ch.~5]{villani2009optimal}: functions $\varphi,\psi$ with $\varphi(x)+\psi(z)\ge d_\kappa(x,z)$ everywhere and equality on the candidate support.
Their contact sets pin down the optimizers uniquely, even in the regime where the second coordinate is not a function of the first, see Lemma~\ref{lem:dual-certificate}.

\subsection{Extremal families}
\label{sec:eta-families}

The boundary is attained by two copula families, the first of which is indexed by $w\in[0,1]$: let $A_w$ be the copula associated with $\gamma_{T_w}$, where
\begin{equation}
  \label{eq:Tw-def}
  T_w(x)\coloneqq
  \begin{cases}
    1-x, & 0\le x\le1-w,\\
    x+w-1, & 1-w<x\le1.
  \end{cases}
\end{equation}
Under $\gamma_{T_w}$, the leading fraction $w$ of the first variable moves comonotonically with the trailing fraction of the second, and the first variable's remaining bottom ranks are countermonotone with the second variable's top ranks.
At the endpoints, $A_0=W$ and $A_1=M$.

The second family is driven by a parameter $b\in(0,\frac12]$: set $z_b\coloneqq\frac{b}{2(1-b)}\in(0,\frac12]$ and let $B_b$ be the copula associated with $\gamma^{R_b}$, where
\begin{equation*}
    R_b(z)\coloneqq
  \begin{cases}
    2(1-b)z+1-b, & 0\le z\le z_b,\\[0.3ex]
    \dfrac{(1-b)(1-2z)}{1-2b}, & z_b<z\le b,\\[1ex]
    1-z, & b<z\le1.
  \end{cases}
\end{equation*}
The first branch climbs from $1-b$ to $1$, the second descends from $1$ back to $1-b$, and the final branch covers $[0,1-b)$.
Over $(1-b,1]$, the densities contributed by the first two branches, $\frac{1}{2(1-b)}$ and $\frac{1-2b}{2(1-b)}$, sum to one, so $R_b$ preserves Lebesgue measure.
Under $\gamma^{R_b}$, the bottom fraction $1-b$ of the first variable is countermonotone with the top fraction of the second.
Given a rank $X=x\in(1-b,1]$ in the leading fraction $b$ of the first variable, the conditional law of $Z$ carries two atoms:
\begin{equation}
  \label{eq:Ba-branches}
  \begin{aligned}
  z_-(x)&\coloneqq\frac{x+b-1}{2(1-b)},
  &\quad\text{with probability }&\frac{1}{2(1-b)},\\[0.4ex]
  z_+(x)&\coloneqq\frac{1-b-(1-2b)x}{2(1-b)},
  &\quad\text{with probability }&\frac{1-2b}{2(1-b)}.
  \end{aligned}
\end{equation}
The two atoms are exactly the inverses of the first two pieces of $R_b$.
At $b=\frac12$, the second branch carries no mass and $B_{1/2}=A_{1/2}$.
In both families, the parameter is thus the leading fraction of the first variable that escapes the countermonotone pairing, and $\eta$ increases with it by Lemma~\ref{lem:eta-nu-values} below.
Figure~\ref{fig:eta-extremizers} displays the two regimes, the endpoint they share, and the transposed copulas on the lower boundary.

\begin{figure}[htbp]
  \centering
  \includegraphics[width=0.8\textwidth]{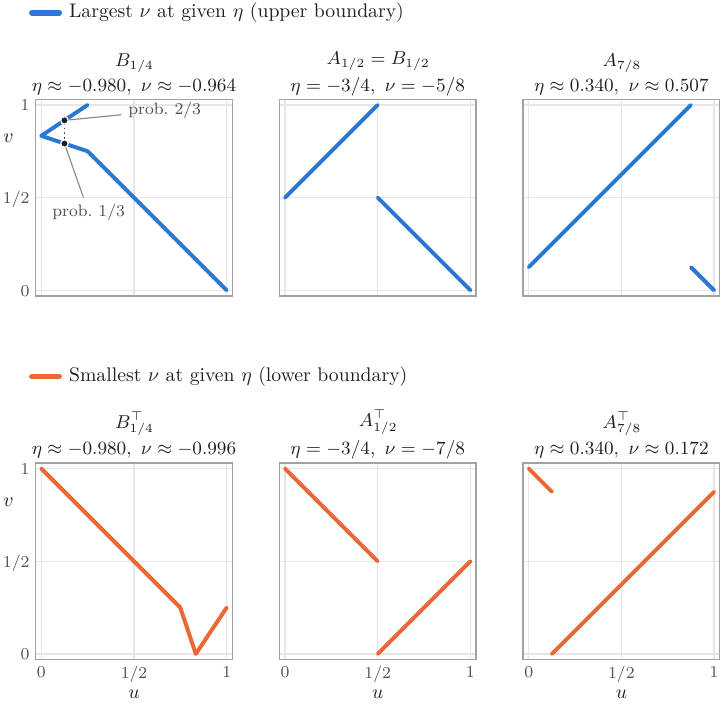}
  \caption{Supports of the extremizers of the $(\eta,\nu)$-region in the original coordinates $(u,v)$.
  The top row shows the upper extremizers $B_{1/4}$, $A_{1/2}=B_{1/2}$, and $A_{7/8}$, and the bottom row shows their transposes, which attain the lower boundary at the same value of $\eta$.
  Each panel states the values of $\eta$ and $\nu$.
  The left column illustrates the randomized regime, which occurs only for $\eta<-3/4$, the centre column shows the regime change at $\eta=-3/4$, and the right column, with $\eta=87/256$, shows a positively dependent case.
  For $B_{1/4}$, where $\eta=-2257/2304$, the support is a graph over $v$, and for $u<1/4$ its two conditional branches have probabilities $2/3$ and $1/3$, illustrated at $u=1/8$.
  In every panel, the mass is spread uniformly along the coordinate over which the support is a graph.}
  \label{fig:eta-extremizers}
\end{figure}

\begin{lemma}[Coefficients along the two families]
\label{lem:eta-nu-values}
For $w\in[0,1]$ and $b\in(0,\frac12]$,
\begin{align}
  \eta(A_w)&=2w^3-1, &
  \nu(A_w)&=2(2-w)w^3-1,
  \label{eq:Aw-values}\\
  \eta(B_b)&=e_b=-1+\frac{b^3(2b^2-9b+8)}{8(1-b)^2}, &
  \nu(B_b)&=-1+\frac{(2-b)b^3}{1-b}.
  \label{eq:Ba-values}
\end{align}
In particular, $\nu(A_w)-\eta(A_w)=2(1-w)w^3$, whereas $\nu(B_b)-\eta(B_b)=\Upsilon(e_b)$ as specified in \eqref{eq:Upsilon-param}.
Both parameter maps $w\mapsto\eta(A_w)$ and $b\mapsto\eta(B_b)$ are strictly increasing.
For $w\in[\frac12,1]$, the first ranges over $[-\frac34,1]$, and for $b\in(0,\frac12]$, the second ranges over $(-1,-\frac34]$.
\end{lemma}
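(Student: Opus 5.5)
The plan is to compute the three moments $\E[XZ]$, $\E[X^2Z]$ and $\E[XZ^2]$ for each family and insert them into Lemma~\ref{lem:coupling}. We have $\eta=6(\E[X^2Z]+\E[XZ^2])-2$ and $\nu=12\,\E[X^2Z]-2$. The integration variable is chosen by how each coupling is built.

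\textbf{Family $A_w$.} Under $\gamma_{T_w}$ we integrate over $x$ against Lebesgue measure. On $[0,1-w]$ put $z=1-x$, and on $(1-w,1]$ put $z=x+w-1$. The substitution $s=x-(1-w)\in[0,w]$ keeps the second piece short. Both pieces are polynomial integrals, and carrying them out should give $\E[X^2Z]$ and $\E[XZ^2]$ as polynomials in $w$. Combining them yields \eqref{eq:Aw-values}. As a sanity check, $w=0$ must give $(\eta,\nu)=(-1,-1)$ and $w=1$ must give $(1,1)$. The difference $\nu-\eta=2(1-w)w^3$ then follows by subtraction.

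\textbf{Family $B_b$.} Under $\gamma^{R_b}$ we integrate over $z$ instead, with $X=R_b(z)$ on the three branches $[0,z_b]$, $(z_b,b]$ and $(b,1]$. The branch $X=1-z$ on $(b,1]$ gives elementary integrals. The two linear branches give polynomial integrals in $z$ whose upper limits involve $z_b=b/(2(1-b))$ and the slope $(1-b)/(1-2b)$. This is where the factors $(1-b)^{-1}$, $(1-b)^{-2}$ and possibly $(1-2b)^{-1}$ enter. I expect the $(1-2b)$ denominators to cancel once the branch integrals are added; the stated formulas contain no such factor, and this serves as a check.

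An alternative for the first two branches is to integrate over $x\in(1-b,1]$ using the two atoms from \eqref{eq:Ba-branches}. Then $\E[h(X,Z)]$ on that part equals
\[
  \int_{1-b}^1\Bigl(\tfrac{1}{2(1-b)}\,h(x,z_-(x))+\tfrac{1-2b}{2(1-b)}\,h(x,z_+(x))\Bigr)\de x.
\]
This integrand is a polynomial in $x$ divided by powers of $(1-b)$, so the $(1-2b)$ never appears, and I would use this route.

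The algebra then has to be simplified to $e_b$ and to $\nu(B_b)=-1+(2-b)b^3/(1-b)$. To verify $\nu(B_b)-\eta(B_b)=\Upsilon(e_b)$, put both over the denominator $8(1-b)^2$:
\[
  8(2-b)b^3(1-b)-b^3(2b^2-9b+8)=b^3(6b^2-15b+8),
\]
which matches \eqref{eq:Upsilon-param}. Two checks are available: $b\to0$ gives $\eta=\nu=-1$, and $b=\tfrac12$ must reproduce $A_{1/2}$, namely $\eta=-\tfrac34$ and $\nu-\eta=\tfrac18$.

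\textbf{Monotonicity and ranges.} For $A_w$, the map $2w^3-1$ is clearly strictly increasing and sends $[\tfrac12,1]$ onto $[-\tfrac34,1]$. For $B_b$, I would show that $f(b)=b^3(2b^2-9b+8)/(1-b)^2$ has positive derivative on $(0,\tfrac12]$. The logarithmic derivative is
\[
  \frac{3}{b}+\frac{4b-9}{2b^2-9b+8}+\frac{2}{1-b}.
\]
Note that $2b^2-9b+8>0$ on this interval. The middle term is negative but bounded by about $9/3.5$ in absolute value, while $3/b\ge6$ there, so the sum is positive. Alternatively, clear denominators and check that the resulting quadratic or cubic is positive. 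Continuity, the limits $e_b\to-1$ as $b\to0$, and $e_{1/2}=-\tfrac34$ then give the range $(-1,-\tfrac34]$.

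\textbf{Main obstacle.} The hardest step is the bookkeeping in the $B_b$ integrals, because they have piece-dependent limits and rational coefficients. The plan is to use the atom representation, expand everything with the common factor $b^3/(8(1-b)^2)$ in mind, and confirm the endpoint identities.
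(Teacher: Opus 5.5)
Your proposal is correct and follows the paper's proof: you integrate the moments from Lemma~\ref{lem:coupling} over the linear pieces of $T_w$, and for $B_b$ you use the same two-atom weighting from \eqref{eq:Ba-branches} that the paper displays, which avoids $(1-2b)$ denominators and covers $b=\frac12$ directly. The only deviation is the monotonicity of $b\mapsto e_b$: you bound the logarithmic derivative ($3/b\ge6$ against $|4b-9|/(2b^2-9b+8)\le\frac94$), whereas the paper computes $\partial_b\eta(B_b)=\frac{b^2(2-b)(3b^2-8b+6)}{4(1-b)^3}>0$, and both arguments are valid.
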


\begin{proof}
Direct integration over the linear pieces of \eqref{eq:Tw-def}, combined with \eqref{eq:coupling-moments}, gives \eqref{eq:Aw-values}.
For \eqref{eq:Ba-values}, weight the two branches as in \eqref{eq:Ba-branches}.
One of the required integrals, for instance, reads
\[
  \E[X^2Z]=\int_0^{1-b}x^2(1-x)\de x+\int_{1-b}^1x^2\,\frac{z_-(x)+(1-2b)\,z_+(x)}{2(1-b)}\de x.
\]
That $\eta(A_w)$ increases in $w$ is plain from its formula, and differentiating \eqref{eq:Ba-values} gives
\[
  \partial_b\eta(B_b)=\frac{b^2(2-b)(3b^2-8b+6)}{4(1-b)^3}>0.
\]
In addition, $\eta(B_{1/2})=-\frac34$, and $\eta(B_b)\to-1$ as $b\to0$.
\end{proof}

\subsection{Dual certificates}
\label{sec:eta-dual}

Both regimes rest on the following principle, which we state for a general continuous cost $c$.
If two potentials $\varphi,\psi$ satisfy $\varphi(x)+\psi(z)\ge c(x,z)$ everywhere, then the sum of the integrals of $\varphi$ and $\psi$ over $[0,1]$ bounds the expected cost of every coupling in $\Gam$, and a coupling concentrated on the contact set, where equality holds, attains this bound.
The potentials thus certify that such a coupling is optimal, and if the contact set is a graph up to finitely many lines, they also certify that it is the only optimal coupling.

\begin{lemma}[Dual certificates]
\label{lem:dual-certificate}
Let $c:[0,1]^2\to\R$ and $\varphi,\psi:[0,1]\to\R$ be continuous functions such that
\[
  \Delta(x,z)\coloneqq\varphi(x)+\psi(z)-c(x,z)\ge0
  \qquad\text{for all }(x,z)\in[0,1]^2,
\]
and let $K\coloneqq\{(x,z)\in[0,1]^2:\Delta(x,z)=0\}$ be their contact set.
\begin{enumerate}[label=(\alph*),leftmargin=2em]
\item
Every $\gamma\in\Gam$ satisfies
\[
  \E_\gamma[c(X,Z)]\le\int_0^1\varphi(t)\de t+\int_0^1\psi(t)\de t,
\]
with equality if and only if $\gamma(K)=1$.
In particular, if some $\gamma^*\in\Gam$ satisfies $\gamma^*(K)=1$, then the maximizers of $\gamma\mapsto\E_\gamma[c(X,Z)]$ over $\Gam$ are exactly the couplings concentrated on $K$.
\item
Suppose that $K\subseteq\{(x,T(x)):x\in[0,1]\}\cup(N_1\times[0,1])\cup([0,1]\times N_2)$ for a Borel map $T:[0,1]\to[0,1]$ and finite sets $N_1,N_2\subseteq[0,1]$.
Then every $\gamma\in\Gam$ with $\gamma(K)=1$ is the law $\gamma_T$ of $(X,T(X))$, and in particular $T$ preserves Lebesgue measure.
The same holds with the roles of the two coordinates exchanged and with $\gamma^T$ in place of $\gamma_T$.
\end{enumerate}
\end{lemma}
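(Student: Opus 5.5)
For part (a), I integrate the pointwise inequality $\Delta\ge0$ against an arbitrary $\gamma\in\Gam$. Since $\varphi,\psi,c$ are continuous on the compact square, they are bounded and every integral is finite. Because both marginals of $\gamma$ are Lebesgue measure on $[0,1]$,
\[
  \E_\gamma[\varphi(X)]+\E_\gamma[\psi(Z)]=\int_0^1\varphi(t)\de t+\int_0^1\psi(t)\de t .
\]
It follows that
\[
  \int_0^1\varphi\de t+\int_0^1\psi\de t-\E_\gamma[c(X,Z)]=\E_\gamma[\Delta(X,Z)]\ge0,
\]
which is the inequality. Equality means $\E_\gamma[\Delta]=0$, and for a nonnegative function this holds exactly when $\Delta=0$ $\gamma$-almost surely, that is, when $\gamma(K)=1$. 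The set $K$ is closed by continuity, hence Borel, so $\gamma(K)$ is well defined.

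For the ``in particular'' claim, suppose some $\gamma^*$ is concentrated on $K$. Then $\gamma^*$ attains the upper bound, so the maximum value equals the bound. Consequently a coupling is a maximizer if and only if it attains the bound, and by the equality case this happens if and only if it is concentrated on $K$.

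For part (b), take $\gamma\in\Gam$ with $\gamma(K)=1$. Each strip $\{x_0\}\times[0,1]$ with $x_0\in N_1$ has $\gamma$-measure at most $\mathrm{Leb}(\{x_0\})=0$, because the first marginal is Lebesgue. Likewise each strip $[0,1]\times\{z_0\}$ with $z_0\in N_2$ is null, using the second marginal. As $N_1$ and $N_2$ are finite, $\gamma$ is concentrated on the graph $\{(x,T(x))\}$. This graph is Borel, being the preimage of the diagonal under the Borel map $(x,z)\mapsto(T(x),z)$, or equivalently the zero set of $(x,z)\mapsto|z-T(x)|$. Hence $Z=T(X)$ holds $\gamma$-almost surely.

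For any Borel sets $A,B$ I then get
\[
  \gamma(A\times B)=\gamma\bigl(\{X\in A,\ T(X)\in B\}\bigr)=\mathrm{Leb}\bigl(A\cap T^{-1}(B)\bigr)=\gamma_T(A\times B),
\]
using that $X$ is uniform. Measurable rectangles form a $\pi$-system generating the Borel $\sigma$-field of the square, so $\gamma=\gamma_T$. Taking $A=[0,1]$ and using that the second marginal of $\gamma$ is uniform yields $\mathrm{Leb}(T^{-1}(B))=\mathrm{Leb}(B)$, so $T$ preserves Lebesgue measure. The version with the coordinates exchanged follows by applying the same argument to the image of $\gamma$ under $(x,z)\mapsto(z,x)$, which produces $\gamma^T$.

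None of these steps is a real obstacle. The only points that need care are the measurability of the graph of $T$ and the fact that the equality case in (a) needs no uniqueness of $\gamma^*$. Both are handled above.
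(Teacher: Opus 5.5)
Your proof is correct and follows the paper's argument. For (a) you integrate $\Delta\ge0$ against $\gamma$ using the uniform marginals, and for (b) you discard the null strips $N_1\times[0,1]$ and $[0,1]\times N_2$ to concentrate $\gamma$ on the graph of $T$, supplying details the paper leaves implicit: Borel measurability of the graph, the $\pi$-system identification $\gamma=\gamma_T$, and the derivation that $T$ preserves Lebesgue measure.
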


\begin{proof}
Part~(a) is the elementary direction of Kantorovich duality, see \cite[Thm.~5.10]{villani2009optimal}, applied to the cost $-c$: integrating $\Delta\ge0$ against $\gamma$ and using the uniform marginals gives the bound, and equality holds exactly when $\Delta=0$ holds $\gamma$-almost surely.
For~(b), the sets $N_1\times[0,1]$ and $[0,1]\times N_2$ are $\gamma$-null because the marginals have no atoms, so $\gamma$ is concentrated on the graph of $T$ and is therefore the law of $(X,T(X))$.
\end{proof}

It remains to construct such potentials for the costs $d_\kappa$, and we start with the regime in which the optimal coupling is supported on a graph over the first coordinate.
For $w\in[\frac12,1)$, set $\kappa_w\coloneqq\frac{3-2w}{2w}$, and define continuous functions $\varphi_w,\psi_w:[0,1]\to\R$ by $\varphi_w(1-w)=0$, $\psi_w(0)=0$, and
\begin{equation}
  \label{eq:potential-A}
  \begin{gathered}
  \varphi_w'(x)=
  \begin{cases}
    (1-x)\bigl\{2x-\kappa_w(1-x)\bigr\}, & 0<x<1-w,\\
    (x+w-1)\bigl\{2x-\kappa_w(x+w-1)\bigr\}, & 1-w<x<1,
  \end{cases}\\[0.5ex]
  \psi_w'(z)=
  \begin{cases}
    (z+1-w)\bigl\{z+1-w-2\kappa_wz\bigr\}, & 0<z<w,\\
    (1-z)\bigl\{1-z-2\kappa_wz\bigr\}, & w<z<1.
  \end{cases}
  \end{gathered}
\end{equation}
These definitions align the derivatives of $\varphi_w$ and $\psi_w$ with the corresponding partial derivatives of $d_{\kappa_w}$ along both branches of $\gamma_{T_w}$.
The support of $\gamma_{T_w}$ is
\[
  S_w\coloneqq\{(x,1-x):0\le x\le1-w\}
       \cup\{(x,x+w-1):1-w\le x\le1\}.
\]

\begin{proposition}[Potentials for the graph regime]
\label{prop:dual-A}
Let $w\in[\frac12,1)$.
Then every $(x,z)\in[0,1]^2$ satisfies
\begin{equation}
  \label{eq:dual-A-inequality}
  \varphi_w(x)+\psi_w(z)\ge d_{\kappa_w}(x,z).
\end{equation}
If $w>\frac12$, equality holds exactly on $S_w$, and if $w=\frac12$, it holds exactly on $S_{1/2}\cup([\frac12,1]\times\{\frac12\})$.
\end{proposition}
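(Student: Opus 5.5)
The plan is to exploit that $\varphi_w,\psi_w$ were built so that $\Delta\coloneqq\varphi_w(x)+\psi_w(z)-d_{\kappa_w}(x,z)$ vanishes to first order along $S_w$. Then, for each fixed $x$, I would write $\Delta(x,\cdot)$ as an integral in $z$ starting from the contact point $z=T_w(x)$.

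First I would check that $\Delta=0$ on $S_w$. Differentiating $\Delta(x,T_w(x))$ along each branch gives $\varphi_w'(x)+\psi_w'(T_w(x))T_w'(x)-\tfrac{\mathrm d}{\mathrm dx}d_{\kappa_w}(x,T_w(x))$. This vanishes by \eqref{eq:potential-A}, because $\varphi_w'$ and $\psi_w'$ are exactly the partial derivatives $\partial_xd_{\kappa_w}$ and $\partial_zd_{\kappa_w}$ evaluated on the branches. Together with the normalizations $\varphi_w(1-w)=0$, $\psi_w(0)=0$ and continuity at the junction point $(1-w,0)$, this shows that $\Delta$ is constant along all of $S_w$, and the constant is $\Delta(1-w,0)=0$.

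Next I would use that $T_w$ is a bijection of $[0,1]$ up to endpoints. The branch $x\le1-w$ maps onto $[w,1]$ and the branch $x\ge1-w$ maps onto $[0,w]$. Let $s\coloneqq T_w^{-1}$, that is, $s(t)=t+1-w$ for $t<w$ and $s(t)=1-t$ for $t>w$. Then $\psi_w'(t)=\partial_zd_{\kappa_w}(s(t),t)=s(t)^2-2\kappa_ws(t)t$, and for fixed $x$,
\[
  \Delta(x,z)=\int_{T_w(x)}^{z}\bigl\{\partial_zd_{\kappa_w}(s(t),t)-\partial_zd_{\kappa_w}(x,t)\bigr\}\de t
  =\int_{T_w(x)}^{z}\bigl(s(t)-x\bigr)\bigl(s(t)+x-2\kappa_wt\bigr)\de t .
\]
So the inequality \eqref{eq:dual-A-inequality} reduces to a sign analysis of the product $(s(t)-x)(s(t)+x-2\kappa_wt)$ on the interval between $T_w(x)$ and $z$. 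The first factor has a clean sign structure. Since $s$ increases on $(0,w)$ and decreases on $(w,1)$ with peak value $1$ at $t=w$, the factor $s(t)-x$ is nonnegative exactly on the interval of $t$ between $1-x$ and $x+w-1$, one endpoint of which is $T_w(x)$. I would split into the cases $x\le1-w$ and $x\ge1-w$. In each case I would check monotonicity of $\Delta(x,\cdot)$: nonincreasing to the left of $T_w(x)$ and nondecreasing to the right, as long as the second factor keeps its sign.

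The main obstacle is the second factor $s(t)+x-2\kappa_wt$, which changes sign along the ray structure that makes $d_\kappa$ neither super- nor submodular. On the subinterval where it has the wrong sign, the integrand becomes negative, so pointwise monotonicity fails and $\Delta(x,\cdot)$ may dip after its minimum at $T_w(x)$. There I would compute $\Delta(x,z)$ explicitly as a polynomial, cubic in $z$ and in $x$ on each cell, and locate its interior critical points in $z$; these are the roots of the quadratic integrand. I expect the specific value $\kappa_w=\frac{3-2w}{2w}$ to be exactly what makes the value of $\Delta$ at the second critical point, the worst one, nonnegative, presumably after factoring out a square. For $w>\frac12$ this value should be strictly positive, giving contact set exactly $S_w$. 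For $w=\frac12$ we have $\kappa=2$, and I expect the critical value to vanish identically at $z=\frac12$ for $x\in[\frac12,1]$. That would produce the extra contact segment $[\frac12,1]\times\{\frac12\}$, which I would confirm by direct substitution.

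Finally, for the exactness of the contact set, I would argue that outside these identified zeros either the integrand is strictly signed on a set of positive length or the critical value is strictly positive. Either way $\Delta>0$ off $S_w$, respectively off $S_{1/2}\cup([\frac12,1]\times\{\frac12\})$.
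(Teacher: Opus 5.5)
Your first step has a genuine gap. The set $S_w$ is not connected. $T_w$ jumps at $x=1-w$ from $T_w(1-w)=w$ down to $0$, so $S_w$ consists of the antidiagonal segment from $(0,1)$ to $(1-w,w)$ and the comonotone segment from $(1-w,0)$ to $(1,w)$, and no junction point joins them. The vanishing tangential derivative together with $\Delta(1-w,0)=0$ therefore gives $\Delta=0$ only on the comonotone segment. On the antidiagonal segment $\Delta$ is merely constant. If the same construction is carried out with an arbitrary $\kappa$ in place of $\kappa_w$, this constant is
\[
  \Delta(1-w,w)=\int_0^w\bigl\{\psi_w'(t)-\partial_zd_{\kappa}(1-w,t)\bigr\}\de t
  =\int_0^w\bigl\{2(1-w)t+(1-2\kappa)t^2\bigr\}\de t
  =\frac{w^2}{3}\bigl\{3-2w(1+\kappa)\bigr\},
\]
which vanishes if and only if $\kappa=\frac{3-2w}{2w}=\kappa_w$. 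Matching the levels of the two disjoint branches is exactly where the value of $\kappa_w$ enters, and the paper checks it as a separate anchor, $\psi_w(w)=d_{\kappa_w}(1-w,w)$. Your argument never uses $\kappa_w$ at this point, so it would equally \emph{prove} $\Delta=0$ on $S_w$ for every $\kappa$, which is false. Consequently your integral representation of $\Delta(x,z)$ starting from $T_w(x)$ is unjustified for $x<1-w$.

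The same misreading affects your sign analysis and your guess about the role of $\kappa_w$. The inverse $s=T_w^{-1}$ has no peak value $1$ at $t=w$; it jumps from $s(w^-)=1$ to $s(w^+)=1-w$. So for $x\ge1-w$ the factor $s(t)-x$ is nonnegative exactly on $[x+w-1,w]$, not on the interval between $1-x$ and $x+w-1$; your description is right only for $x\le1-w$, where the set is $[0,1-x]$. Nor is $\kappa_w$ tuned to make some interior critical value nonnegative.

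Once both anchors are in place, your slicing does go through. For $x\ge1-w$, $\Delta(x,\cdot)$ decreases on $[0,T_w(x)]$ and increases on $[w,1]$. On $[T_w(x),w]$ it first increases and then possibly decreases. So everything reduces to $\Delta(\cdot,w)\ge0$ on $[1-w,1]$. For $w>\frac12$ this edge function rises and then falls between its zeros at $(1-w,w)$ and $(1,w)$, the former being precisely the missing anchor; for $w=\frac12$ it vanishes identically, which yields the extra segment. For $x\le1-w$, the reduction is to $\Delta(x,w)\ge0$ and to $\Delta(x,0)=\varphi_w(x)$, which is nonnegative because $\varphi_w'<0$ on $(0,1-w)$ when $w\ge\frac12$.

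Repaired this way, your approach is a legitimate alternative to the paper's argument. The paper instead splits the square into four rectangles, uses square factorizations on the two cells containing the branches, and reduces the other two cells to the edges $x=1-w$ and $z=w$. As written, however, your proposal does not establish that $\Delta$ vanishes on the antidiagonal branch.
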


\begin{proof}
Set $\Delta(x,z)\coloneqq\varphi_w(x)+\psi_w(z)-d_{\kappa_w}(x,z)$.
By \eqref{eq:potential-A}, $\frac{\mathrm d}{\mathrm dx}\Delta(x,x+w-1)=0$ on $(1-w,1)$ and $\frac{\mathrm d}{\mathrm dx}\Delta(x,1-x)=0$ on $(0,1-w)$.
In addition, $\Delta(1-w,0)=0$, and evaluating the integral that defines $\psi_w$ yields $\psi_w(w)=d_{\kappa_w}(1-w,w)$.
Taken together, these facts force $\Delta$ to vanish everywhere on $S_w$.

Nonnegativity away from that set is proved rectangle by rectangle, after dividing the square into four of them.
Each of the following factorizations is an elementary polynomial identity, and every step uses $w\ge\frac12$.
The first rectangle is $[1-w,1]\times[0,w]$, where
\[
  \begin{aligned}
  \Delta(x,z)&=\frac{(x+w-1-z)^2\,\bigl\{1-w+(2w-1)x-2(1-w)z\bigr\}}{2w},\\
  1-w+(2w-1)x-2(1-w)z&\ge2(1-w)(w-z)\ge0.
  \end{aligned}
\]
For $w>\frac12$, $\Delta$ vanishes on this rectangle exactly on the line $z=x+w-1$ and at the corner $(1-w,w)$.
For $w=\frac12$, the second factor also vanishes along $z=\frac12$, and this produces the additional horizontal segment.

Next comes $[0,1-w]\times[w,1]$, where
\[
  \begin{aligned}
  \Delta(x,z)&=\frac{(x+z-1)^2\,F(x,z)}{6w},\\
  F(x,z)&\coloneqq(6-2w)z-(3+2w)x+3-4w.
  \end{aligned}
\]
The function $F$ decreases in $x$ and increases in $z$, so $F\ge F(1-w,w)=3w>0$.
Equality thus occurs along $z=1-x$ and nowhere else.

The third rectangle is $[0,1-w]\times[0,w]$, on which
\[
  \partial_x\Delta(x,z)=\frac{(x+z-1)\,\bigl\{(3-2w)z-(3+2w)x+3-2w\bigr\}}{2w}\le0,
\]
because $x+z\le1$ and because the second factor decreases in $x$ and is nonnegative at $x=1-w$, where it equals $w(2w-1)+(3-2w)z$.
Hence $\Delta(x,z)\ge\Delta(1-w,z)$.
Along the edge $x=1-w$,
\[
  \partial_z\Delta(1-w,z)=-\frac{(1-w)z(3z-2w)}{w}.
\]
Up to $z=\frac{2w}{3}$ this derivative is nonnegative, and thereafter it is nonpositive.
Consequently $\Delta(1-w,\cdot)\ge\min\{\Delta(1-w,0),\Delta(1-w,w)\}=0$.

Exchanging the roles of the coordinates handles the fourth rectangle $[1-w,1]\times[w,1]$:
\begin{gather*}
  \partial_z\Delta(x,z)=\frac{(x+z-1)\,\bigl\{(3-w)z-wx-w\bigr\}}{w}\ge0
  \qquad\text{and}\\
  \partial_x\Delta(x,w)=\frac{(1-x)(2w-1)(3-2w-3x)}{2w}.
\end{gather*}
Here the inequality holds since $x+z\ge1$ and since the second factor increases in $z$ and equals $w(2-w-x)\ge0$ at $z=w$.
It follows that $\Delta(x,z)\ge\Delta(x,w)$.
Along this edge, the derivative changes from positive to negative if $w>\frac12$ and is identically zero if $w=\frac12$.
In either case the minimum on the edge sits at an endpoint, and both endpoint values vanish.

On the final two rectangles, equality for $w>\frac12$ is limited to the three points $(1-w,0)$, $(1-w,w)$, and $(1,w)$, all of which belong to $S_w$.
When $w=\frac12$, the edge $[\frac12,1]\times\{\frac12\}$ also has zero slack, and the displayed derivative inequalities rule out any other contact points.
The claimed equality sets follow.
\end{proof}

For the second regime, fix $b\in(0,\frac12)$ and put $\kappa_b\coloneqq\frac{2(1-b)}{b}$.
No confusion with $\kappa_w$, $\varphi_w$, and $\psi_w$ can arise from this notation or from $\varphi_b$ and $\psi_b$ below, because $b<\frac12\le w$.
Recall $z_b=\frac{b}{2(1-b)}$ and the branches $z_\mp$ from \eqref{eq:Ba-branches}, whose inverses are the first two pieces of $R_b$: $R_b^-(z)\coloneqq2(1-b)z+1-b$ on $[0,z_b]$ and $R_b^+(z)\coloneqq\frac{(1-b)(1-2z)}{1-2b}$ on $[z_b,b]$.
Let $\varphi_b,\psi_b:[0,1]\to\R$ be continuous, normalized by $\varphi_b(1-b)=0$ and $\psi_b(0)=0$, with derivatives
\begin{equation}
  \label{eq:potential-B}
  \begin{gathered}
  \varphi_b'(x)=
  \begin{cases}
    (1-x)\bigl\{2x-\kappa_b(1-x)\bigr\}, & 0<x<1-b,\\[0.3ex]
    z_-(x)\bigl\{2x-\kappa_bz_-(x)\bigr\}, & 1-b<x<1,
  \end{cases}\\[0.5ex]
  \psi_b'(z)=
  \begin{cases}
    R_b^-(z)\bigl\{R_b^-(z)-2\kappa_bz\bigr\}, & 0<z<z_b,\\[0.3ex]
    R_b^+(z)\bigl\{R_b^+(z)-2\kappa_bz\bigr\}, & z_b<z<b,\\[0.3ex]
    (1-z)\bigl\{1-z-2\kappa_bz\bigr\}, & b<z<1.
  \end{cases}
  \end{gathered}
\end{equation}
Over $(1-b,1)$, the coupling $\gamma^{R_b}$ runs along two branches, yet $\varphi_b'$ is defined through $z_-$ alone.
No inconsistency arises, because
\begin{equation}
  \label{eq:branch-sum}
  z_-(x)+z_+(x)=\frac{2x}{\kappa_b}
  \quad\Longrightarrow\quad
  \partial_xd_{\kappa_b}(x,z_-(x))=\partial_xd_{\kappa_b}(x,z_+(x)),
\end{equation}
and this equality is exactly the first-order condition that permits an optimal coupling to split its mass between two branches.

\begin{proposition}[Potentials for the randomized regime]
\label{prop:dual-B}
Let $b\in(0,\frac12)$.
Every $(x,z)\in[0,1]^2$ satisfies
\begin{equation*}
    \varphi_b(x)+\psi_b(z)\ge d_{\kappa_b}(x,z),
\end{equation*}
and equality holds exactly when $x=R_b(z)$.
\end{proposition}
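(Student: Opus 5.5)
I would follow the template of Proposition~\ref{prop:dual-A}. Set $\Delta(x,z)\coloneqq\varphi_b(x)+\psi_b(z)-d_{\kappa_b}(x,z)$ and first show that $\Delta$ vanishes on the graph $\{(R_b(z),z)\}$. The definitions in \eqref{eq:potential-B} give
\[
\tfrac{\mathrm d}{\mathrm dz}\Delta(R_b(z),z)=\varphi_b'(R_b(z))R_b'(z)+\psi_b'(z)-\partial_xd\,R_b'-\partial_zd .
\]
By construction $\psi_b'(z)=\partial_zd_{\kappa_b}(R_b(z),z)$, because $\partial_zd_\kappa(x,z)=x^2-2\kappa xz=x(x-2\kappa z)$. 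Likewise $\varphi_b'(x)=\partial_xd_{\kappa_b}(x,T(x))$ on each branch: on $(0,1-b)$ with $T(x)=1-x$, and on $(1-b,1)$ with $z_-$. By \eqref{eq:branch-sum} the latter also equals the derivative along $z_+$. Hence $\Delta$ is constant on each of the three branches. The constants are matched through the normalizations: $\varphi_b(1-b)=0$ and $\psi_b(0)=0$ give $\Delta(1-b,0)=-d(1-b,0)=0$ on the lower branch, which starts at $(1-b,0)$. The middle branch meets the lower one at $(1,z_b)$, and the third branch meets the middle one at $(1-b,b)$, since $R_b^+(b)=1-b$. Continuity of the potentials then propagates zero slack along the whole support of $\gamma^{R_b}$.

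For nonnegativity I would split the square into rectangles adapted to the breakpoints: $x\in[0,1-b]$ or $[1-b,1]$, and $z\in[0,b]$ or $[b,1]$.
\begin{itemize}
\item On $[0,1-b]\times[b,1]$, which contains the antidiagonal branch, I expect a factorization $\Delta=(x+z-1)^2F(x,z)/(\text{const})$ with $F$ affine and shown positive by checking a corner, as in Proposition~\ref{prop:dual-A}.
\item On $[0,1-b]\times[0,b]$, I would show $\partial_x\Delta\le0$, reducing to the edge $x=1-b$. On that edge, $\partial_z\Delta$ is a quadratic in $z$ that one can sign, and the endpoint values $\Delta(1-b,0)=\Delta(1-b,b)=0$ close the argument.
\item On $[1-b,1]\times[b,1]$, I would show $\partial_z\Delta\ge0$ and reduce to the edge $z=b$, which is again handled by endpoints.
\end{itemize}

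The genuinely new rectangle is $[1-b,1]\times[0,b]$, which carries both branches $z_\pm$. Here, for fixed $x$, $z\mapsto\Delta(x,z)$ is not a single quadratic because $\psi_b$ is piecewise, defined separately on $[0,z_b]$ and $[z_b,b]$. I would compute $\psi_b$ explicitly on each piece; it is a polynomial of degree three in $z$. I would then show that, for fixed $x$,
\[
\partial_z\Delta(x,z)=R_b^\pm(z)\{R_b^\pm(z)-2\kappa_bz\}-x(x-2\kappa_bz)
\]
factors as $(R_b^\pm(z)-x)$ times an affine factor. On the first piece this factor is $R^-+x-2\kappa_bz$, and it stays positive because $\kappa_b z_-$ is small there. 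On the second piece the sign of $R^+-x$ combines with the sign of the affine factor so that $z_+$ is a local minimum. This yields that $\Delta(x,\cdot)$ has local minima exactly at $z_-(x)$ and $z_+(x)$, where $\Delta=0$, so $\Delta\ge0$ with equality only there.

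I expect this rectangle to be the main obstacle. The difficulty is confirming the sign of the affine cofactor across the whole range $b\in(0,\frac12)$, and ruling out extra contact points near the junction $z=z_b$, where $\psi_b'$ switches formula.

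Finally, collecting the equality cases on all rectangles should give the contact set exactly $\{x=R_b(z)\}$, including the corner points, which lie on that graph.
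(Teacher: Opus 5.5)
Your plan follows the paper's proof closely except on one strip. The shared steps are: zero slack along the three branches via the first-order identities and \eqref{eq:branch-sum}; $\partial_x\Delta\le0$ on $[0,1-b]\times[0,b]$ with reduction to the edge $x=1-b$; the $(x+z-1)^2$ factorization on $[0,1-b]\times[b,1]$; and a monotone reduction on $[1-b,1]\times[b,1]$. On the branches, your continuity argument at $(1-b,b)$ even replaces the paper's separate evaluation of $\psi_b(b)$. On the last rectangle you reduce in $z$ first, which works because $\partial_z\Delta=(x+z-1)\{(2\kappa_b+1)z-1-x\}\ge0$ there, and the edge $z=b$ is the top edge of the two-branch strip. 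The divergence is on $[1-b,1]\times[0,b]$, and there your key sign claim is false. Your identity $\partial_z\Delta=(R_b^\pm(z)-x)\{R_b^\pm(z)+x-2\kappa_bz\}$ is right. However, on the first piece the cofactor is
\[ R_b^-(z)+x-2\kappa_bz=(1-b)+x-\frac{2(1-b)(2-b)}{b}\,z , \]
which is strictly decreasing in $z$. It equals $\kappa_b\{z_+(x)-z_-(x)\}\ge0$ at $z=z_-(x)$, but it equals $x-1<0$ at $z=z_b$ for every $x<1$. So it does \emph{not} stay positive on $(0,z_b)$; it vanishes at $z^*(x)=\frac{b(1-b+x)}{2(1-b)(2-b)}\in(z_-(x),z_b)$. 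Your heuristic only evaluated the factor at the point $z=z_-(x)$. Carried out as written, the verification would break at this step.

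The conclusion survives once the signs are tracked correctly. On $(z_b,b)$ the cofactor $R_b^+(z)+x-2\kappa_bz$ is decreasing and starts at $x-1$, hence is negative. So for $x\in[1-b,1)$, the derivative $\partial_z\Delta(x,\cdot)$ has the following signs:
\begin{itemize}
\item negative on $(0,z_-)$;
\item positive on $(z_-,z^*)$;
\item negative on $(z^*,z_+)$, an interval that contains the junction $z_b$;
\item positive on $(z_+,b)$.
\end{itemize}
For $x=1$ it is negative before and positive after $z_b=z_\pm(1)$. Hence $\Delta(x,\cdot)$ peaks at $z^*$, not at $z_b$, and for $x<1$ it is strictly decreasing through $z_b$. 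Its only minima are $z_\pm(x)$, where $\Delta=0$. This removes your worry about spurious contact at the junction. At $x=1-b$ (where $z_-=0$ and $z_+=b$), it also gives the edge analysis you need for $[0,1-b]\times[0,b]$; note that this derivative is piecewise, not a single quadratic.

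The paper avoids this sign bookkeeping altogether. It splits the strip at $z=z_b$ and factors $\Delta$ directly as a positive multiple of $\ell_\mp^2F_\mp$. Here $\ell_\mp$ vanishes exactly on the branch $R_b^\mp$, and the affine factor $F_\mp$ is monotone in $z$ with value $(1-2b)(1-x)\ge0$ at $z=z_b$. Nonnegativity and the equality set can then be read off at once.
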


\begin{proof}
Put $\Delta(x,z)\coloneqq\varphi_b(x)+\psi_b(z)-d_{\kappa_b}(x,z)$.
Just as in Proposition~\ref{prop:dual-A}, equation \eqref{eq:potential-B} makes the derivative of $\Delta$ vanish along all three branches of $R_b$.
Three anchors remain: the identity $\Delta(1-b,0)=0$ anchors $R_b^-$, continuity of $\psi_b$ at $z_b$ anchors $R_b^+$ where the branches meet at $(1,z_b)$, and evaluating the defining integrals gives $\psi_b(b)=d_{\kappa_b}(1-b,b)$, which anchors the antidiagonal branch.
Hence $\Delta=0$ on the graph of $R_b$.

The square splits into six rectangles along the lines $x=1-b$ and $z\in\{z_b,b\}$.
On $[1-b,1]\times[0,z_b]$ and $[1-b,1]\times[z_b,b]$, respectively, polynomial factorization gives
\[
  \Delta=\frac{\ell_-^2\,F_-}{6b(1-b)}
  \qquad\text{and}\qquad
  \Delta=\frac{\ell_+^2\,F_+}{6b(1-b)(1-2b)^2},
\]
where
\[
  \begin{aligned}
  \ell_-(x,z)&\coloneqq x-2(1-b)z-(1-b),\\
  \ell_+(x,z)&\coloneqq(1-2b)x+2(1-b)z-(1-b),\\
  F_-(x,z)&\coloneqq1-b^2-(1-2b)x-2(1-b)(2-b)z,\\
  F_+(x,z)&\coloneqq(1-b)(1-3b)-(1-2b)x+2(1-b)(2-3b)z.
  \end{aligned}
\]
The graphs of $R_b^-$ and $R_b^+$ are exactly the zero sets of the linear factors $\ell_-$ and $\ell_+$, respectively.
In the variable $z$, $F_-$ decreases and $F_+$ increases, and at $z=z_b$ each of them equals $(1-2b)(1-x)\ge0$.
Both rectangles therefore satisfy $\Delta\ge0$, with equality exactly on the corresponding branches $R_b^\mp$.

On $[0,1-b]\times[b,1]$, the factorization reads
\[
  \Delta(x,z)=\frac{(x+z-1)^2\,\bigl\{(4-3b)z-2x+2-3b\bigr\}}{3b}.
\]
Here the second factor decreases in $x$, increases in $z$, and takes the positive value $3b(1-b)$ at $(1-b,b)$.
Equality therefore occurs on the antidiagonal branch and nowhere else.

On each of the rectangles $[0,1-b]\times[0,z_b]$ and $[0,1-b]\times[z_b,b]$,
\[
  \partial_x\Delta(x,z)=-\frac{2(1-x-z)\,\bigl\{(1-b)(1+z)-x\bigr\}}{b}\le0,
\]
because $x\le1-b$ and $z\le b$.
As a result, $\Delta(x,z)\ge\Delta(1-b,z)$.
Along the edge $x=1-b$,
\begin{gather*}
  \partial_z\Delta(1-b,z)=-\frac{4(1-b)^2z\,\bigl\{(2-b)z-b\bigr\}}{b}
  \quad\text{on }(0,z_b),\\
  \partial_z\Delta(1-b,z)=-\frac{4(1-b)^2(b-z)\,\bigl\{(2-3b)z-b(1-b)\bigr\}}{b(1-2b)^2}
  \quad\text{on }(z_b,b).
\end{gather*}
The first of these expressions changes sign from positive to negative, and the second is nonpositive because $z\ge z_b>\frac{b(1-b)}{2-3b}$.
Combined with $\Delta(1-b,0)=0$, $\Delta(1-b,z_b)=\frac{b^2(1-2b)}{6(1-b)}>0$, and $\Delta(1-b,b)=0$, this proves $\Delta(1-b,\cdot)\ge0$ on $[0,b]$.
On both rectangles, then, $\Delta\ge0$, and equality occurs there only at $(1-b,0)$ and $(1-b,b)$.

On the last rectangle, $[1-b,1]\times[b,1]$,
\begin{gather*}
  \partial_x\Delta(x,z)=-\frac{\ell_-(x,z)\,\ell_+(x,z)}{2b(1-b)}\ge0
  \qquad\text{and}\\
  \partial_z\Delta(1-b,z)=\frac{(z-b)\,\bigl\{(1-b)^2+(4-3b)z-1\bigr\}}{b}\ge0.
\end{gather*}
For the first derivative, note that $\ell_-\le\ell_-(1,b)=-b(1-2b)<0$ and $\ell_+\ge\ell_+(1-b,b)=0$ on this rectangle.
For the second, both factors are nonnegative once $z\ge b$, and the second factor is at least $2b(1-b)$.
Consequently $\Delta(x,z)\ge\Delta(1-b,z)\ge\Delta(1-b,b)=0$, and equality is possible only at $(1-b,b)$.
The proof is complete.
\end{proof}

\subsection{The exact regions and the sharp asymmetry bound}
\label{sec:eta-proof}

\begin{proof}[Proof of Theorem~\ref{thm:eta-nu}]
Take $w\in[\frac12,1)$ and $\gamma\in\Gam$, and let $C$ be the copula associated with $\gamma$.
Proposition~\ref{prop:dual-A} shows that the potentials $\varphi_w,\psi_w$ satisfy the assumption of Lemma~\ref{lem:dual-certificate} for the cost $d_{\kappa_w}$, with the coupling $\gamma_{T_w}$ concentrated on their contact set.
Lemma~\ref{lem:dual-certificate}~(a) therefore yields
\[
  \begin{aligned}
  \E_\gamma[d_{\kappa_w}(X,Z)]
  &\le\int_0^1\varphi_w(t)\de t+\int_0^1\psi_w(t)\de t\\
  &=\E_{\gamma_{T_w}}[d_{\kappa_w}(X,Z)].
  \end{aligned}
\]
Applying \eqref{eq:eta-nu-support} to $C$ and to $A_w$ turns this inequality into
\begin{equation}
  \label{eq:eta-nu-support-line}
  (1+\kappa_w)\,\nu(C)-2\kappa_w\,\eta(C)
  \le(1+\kappa_w)\,\nu(A_w)-2\kappa_w\,\eta(A_w)
  \qquad\text{for every }C\in\CC.
\end{equation}
The corresponding inequality with $(B_b,\kappa_b)$ in place of $(A_w,\kappa_w)$ follows in the same way from Proposition~\ref{prop:dual-B}.

\smallskip\noindent\emph{Upper bound.}\quad
Fix $C\in\CC$ with $\eta(C)=e$.
Suppose first that $e\in[-\frac34,1)$, and choose $w\coloneqq\bigl(\frac{1+e}{2}\bigr)^{1/3}\in[\frac12,1)$, so that $\eta(A_w)=e$ by \eqref{eq:Aw-values}.
Combining the supporting inequality \eqref{eq:eta-nu-support-line} with \eqref{eq:Aw-values} and \eqref{eq:Upsilon-def} gives $\nu(C)\le\nu(A_w)=e+2(1-w)w^3=e+\Upsilon(e)$.

Now let $e\in(-1,-\frac34)$, and take the unique $b\in(0,\frac12)$ with $e_b=e$ from Lemma~\ref{lem:eta-nu-values}.
The same reasoning, run with $B_b$ and with \eqref{eq:Ba-values} and \eqref{eq:Upsilon-param}, gives $\nu(C)\le\nu(B_b)=e+\Upsilon(e)$.

At $e=\pm1$, the Fr\'echet--Hoeffding bounds $W\le C\le M$ \cite[Thm.~2.2.3]{nelsen2006introduction} and formula \eqref{eq:nu-def} imply $-1\le\nu(C),\nu(C^\top)\le1$.
Their average $\eta(C)$, see \eqref{eq:eta-def}, reaches either endpoint only when each term does.
The weight in \eqref{eq:nu-def} is strictly positive in the interior and copulas are continuous, so $\nu(C)=\nu(M)$ forces $C=M$ and $\nu(C)=\nu(W)$ forces $C=W$.
The endpoint fibres are thus singletons, realized uniquely by $M$ and $W$.

\smallskip\noindent\emph{Lower bound and fibres.}\quad
Since $\eta(C^\top)=\eta(C)=e$ by \eqref{eq:eta-def}, applying the upper estimate to $C^\top$ shows that
\[
  \nu(C)=2e-\nu(C^\top)\ge e-\Upsilon(e).
\]
Now fix $e\in[-1,1]$ and let $C^*$ attain the upper bound.
The coefficients $\eta$ and $\nu$ are affine by Lemma~\ref{lem:coupling}, and $\eta(C^{*\top})=\eta(C^*)$ by \eqref{eq:eta-def}, so as $t$ ranges over $[0,1]$, the mixtures $(1-t)C^*+t\,C^{*\top}$ sweep out the full fibre.

\smallskip\noindent\emph{Uniqueness.}\quad
Suppose first that $\nu(C)=e+\Upsilon(e)$ for some $e\in[-\frac34,1)$.
Then \eqref{eq:eta-nu-support-line} holds with equality, the coupling of $C$ attains the bound in Lemma~\ref{lem:dual-certificate}~(a), and it is therefore concentrated on the contact set from Proposition~\ref{prop:dual-A}.
This set is the graph of $T_w$, apart from the point $(1-w,0)$ and, for $w=\frac12$, the segment $[\frac12,1]\times\{\frac12\}$.
Applying Lemma~\ref{lem:dual-certificate}~(b) with $N_1=\{1-w\}$, and with $N_2=\{\frac12\}$ if $w=\frac12$ and $N_2=\emptyset$ otherwise, gives $C=A_w$.

For $e\in(-1,-\frac34)$, the same argument through Proposition~\ref{prop:dual-B} concentrates the coupling of $C$ on $\{(R_b(z),z):z\in[0,1]\}$, a graph over the second coordinate, so the last part of Lemma~\ref{lem:dual-certificate}~(b) gives $C=B_b$.
Transposition, finally, is an involution of $\CC$, so uniqueness at the upper boundary carries over to the lower one.
\end{proof}

\begin{corollary}[Maximal asymmetry of Blest's coefficient]
\label{cor:asymmetry}
Every copula $C\in\CC$ obeys
\[
  |\nu(C)-\eta(C)|\le\frac{27}{128},
  \qquad\text{equivalently}\qquad
  \bigl|\nu(C)-\nu(C^\top)\bigr|\le\frac{27}{64}.
\]
Both constants are sharp: the maximum $\frac{27}{128}$ of $\nu-\eta$ is attained only by $A_{3/4}$, at which $(\eta,\nu)=(-\frac{5}{32},\frac{7}{128})$, and the minimum $-\frac{27}{128}$ only by $A_{3/4}^\top$.
\end{corollary}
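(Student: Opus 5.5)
By Theorem~\ref{thm:eta-nu}, every $C\in\CC$ satisfies $|\nu(C)-\eta(C)|\le\Upsilon(\eta(C))$. Equality $\nu(C)-\eta(C)=\Upsilon(\eta(C))$ holds only at the unique upper extremizer for $e=\eta(C)$, and the lower equality only at its transpose. So the corollary reduces to a one-variable problem: maximize $\Upsilon$ over $[-1,1]$ and show the maximum is $\frac{27}{128}$, attained at a single point. The second form of the bound then follows from $\nu(C)-\nu(C^\top)=2\{\nu(C)-\eta(C)\}$, which is \eqref{eq:eta-def}.

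\textbf{The graph regime $e\in[-\frac34,1]$.} Here I parametrize by $w=(\frac{1+e}{2})^{1/3}\in[\frac12,1]$. By Lemma~\ref{lem:eta-nu-values}, $\Upsilon(e)=2(1-w)w^3$. The function $h(w)=2w^3-2w^4$ has derivative $h'(w)=2w^2(3-4w)$, which is positive on $[\frac12,\frac34)$ and negative on $(\frac34,1]$. Hence its unique maximizer on $[\frac12,1]$ is $w=\frac34$, with value
\[
  h\bigl(\tfrac34\bigr)=2\cdot\tfrac14\cdot\tfrac{27}{64}=\tfrac{27}{128}.
\]
At this point $\eta(A_{3/4})=2\cdot\frac{27}{64}-1=-\frac{5}{32}$, and $\nu=-\frac{5}{32}+\frac{27}{128}=\frac{7}{128}$. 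As a cross-check, \eqref{eq:Aw-values} gives $2(2-\frac34)\frac{27}{64}-1=\frac{135}{128}-1=\frac{7}{128}$. Because $w\mapsto e$ is a bijection, this is the unique maximizer of $\Upsilon$ within this regime.

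\textbf{The randomized regime $e\in[-1,-\frac34)$.} I need $\Upsilon(e)<\frac{27}{128}$ here, and the key step is to show $\Upsilon\le\frac18$ on this regime. We have $\Upsilon(-1)=0$, and by \eqref{eq:Upsilon-param}, $\Upsilon(e_b)=\frac{b^3(6b^2-15b+8)}{8(1-b)^2}$ for $b\in(0,\frac12)$. For $b<\frac12$:
\begin{itemize}
\item the factor $b^3<\frac18$;
\item the factor $6b^2-15b+8$ is decreasing on $[0,\frac12]$, because $12b-15<0$ there, so it is at most $8$;
\item the factor $(1-b)^{-2}<4$.
\end{itemize}
Multiplying these gives only $\Upsilon<\frac{1}{8}\cdot 8\cdot 4/8=\frac12$, which is too weak. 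A finer argument is needed, and I would try monotonicity first. Let $g(b)=b^3(6b^2-15b+8)(1-b)^{-2}$. Then
\[
  (\log g)'=\frac3b+\frac{12b-15}{6b^2-15b+8}+\frac{2}{1-b}.
\]
On $(0,\frac12)$ the denominator satisfies $6b^2-15b+8\ge\frac32$, so $\frac{|12b-15|}{6b^2-15b+8}\le\frac{15}{3/2}=10$. I would not rely on this crude bound, though, since it does not dominate $\frac3b$ near $b=\frac12$ (where $\frac3b=6<10$). Instead I would clear denominators and check that the resulting polynomial
\[
  3(6b^2-15b+8)(1-b)+b(12b-15)(1-b)+2b(6b^2-15b+8)
\]
is positive on $[0,\frac12]$. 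Expanding gives $-42b^2+\dots$. Its value is $24$ at $b=0$ and $3$ at $b=\frac12$, and since it is quadratic with negative leading coefficient, it is concave. A concave function positive at both endpoints is positive on the whole interval.

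It follows that $\Upsilon$ is increasing in $b$, and therefore $\Upsilon\le\Upsilon(-\frac34)=\frac18<\frac{27}{128}$ throughout this regime. This monotonicity verification is the only real obstacle in the proof. As an alternative, one can use the bound $b^3\le\frac18$ together with the fact that $(6b^2-15b+8)/(1-b)^2$ equals $8$ at $b=0$ but only $4$ at $b=\frac12$.

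\textbf{Sharpness and uniqueness.} The maximum $\frac{27}{128}$ is attained only at $e=-\frac{5}{32}$. By the uniqueness part of Theorem~\ref{thm:eta-nu}, only $A_{3/4}$ attains $\nu-\eta=\frac{27}{128}$, and only its transpose $A_{3/4}^\top$ attains $\nu-\eta=-\frac{27}{128}$. Doubling these gives the sharp bound $|\nu(C)-\nu(C^\top)|\le\frac{27}{64}$, with the same unique extremizers.
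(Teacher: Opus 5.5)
Your overall structure matches the paper's proof. You reduce the corollary to maximizing $\Upsilon$ via Theorem~\ref{thm:eta-nu}. In the graph regime you use calculus; the paper differentiates in $e$, you in $w$, which is equivalent. In the randomized regime you use monotonicity in $b$ to bound $\Upsilon$ by $\frac18$. Uniqueness again comes from the theorem. The graph-regime computation and the uniqueness argument are correct.

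\textbf{The gap.} It sits in the step you single out as the only real obstacle: the expansion there is wrong. Clearing denominators in $(\log g)'$ gives a cubic, not a quadratic:
\[
  P(b)=-18b^3+60b^2-68b+24=2(2-3b)(3b^2-8b+6),
\]
and its value at $b=\tfrac12$ is $P(\tfrac12)=\tfrac{11}{4}$, not $3$. Moreover $P''(b)=120-108b>0$ on $[0,\tfrac12]$, so $P$ is \emph{convex} there. Your argument ``concave and positive at both endpoints'' therefore does not apply, and as written it does not prove $P>0$.

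\textbf{The fix.} The claim is still true, and the factorization settles it at once. First, $2-3b>0$ for $b<\tfrac23$. Second, $3b^2-8b+6$ has discriminant $64-72<0$. Since $g'=b^2P(b)/(1-b)^3$, this is exactly the paper's formula $\partial_b\Upsilon(e_b)=\frac{b^2(2-3b)(3b^2-8b+6)}{4(1-b)^3}>0$.

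\textbf{Your fallback.} It is also misstated. The ratio $(6b^2-15b+8)/(1-b)^2$ equals $8$ at $b=\tfrac12$, not $4$. And endpoint values alone would not control the interior anyway. Done properly, though, it works and even avoids monotonicity. The ratio has derivative $(1-3b)/(1-b)^3$, so its maximum on $[0,\tfrac12]$ is $\tfrac{33}{4}$, attained at $b=\tfrac13$. Hence, for $b\in(0,\tfrac12)$,
\[
  \Upsilon(e_b)<\tfrac18\cdot\tfrac{33}{4}\cdot\tfrac18=\tfrac{33}{256}<\tfrac{54}{256}=\tfrac{27}{128},
\]
which is all the corollary needs, including the strictness required for uniqueness.
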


\begin{proof}
Differentiating \eqref{eq:Upsilon-def} on $[-\frac34,1]$ gives $1-\frac43\,2^{-1/3}(1+e)^{1/3}$.
On that interval, $\Upsilon$ is therefore strictly concave, with its maximum where $1+e=\frac{27}{32}$, that is, at $e=-\frac{5}{32}$.
There, $\Upsilon(-\frac{5}{32})=\frac{27}{32}(1-\frac34)=\frac{27}{128}$, and $\eta(A_w)=-\frac{5}{32}$ for $w=\frac34$ by \eqref{eq:Aw-values}.

Along the parametric portion, differentiating \eqref{eq:Upsilon-param} gives
\[
  \partial_b\Upsilon(e_b)
  =\frac{b^2(2-3b)(3b^2-8b+6)}{4(1-b)^3}>0,
  \qquad b\in(0,\tfrac12).
\]
On that part, consequently, $\Upsilon<\Upsilon(-\frac34)=\frac18<\frac{27}{128}$, as $e_b$ increases in $b$ by Lemma~\ref{lem:eta-nu-values}.
The uniqueness assertion in Theorem~\ref{thm:eta-nu} and \eqref{eq:Aw-values} now deliver the equality cases.
The identity $\nu(C)-\eta(C)=\{\nu(C)-\nu(C^\top)\}/2$, a consequence of \eqref{eq:eta-def}, turns all of this into the equivalent asymmetry bound.
\end{proof}

How Blest's coefficient reacts when the two variables are interchanged is visible most directly in the coordinates $(\nu(C),\nu(C^\top))$, where the boundary of the region, shown in Figure~\ref{fig:nu-transpose}, becomes the graph of the following function together with its inverse.
Define $\Lambda:[-1,1]\to[-1,1]$ by $\Lambda(-1)\coloneqq-1$,
\begin{equation}
  \label{eq:Lambda-def}
  \Lambda(n)\coloneqq4\Bigl(\frac{1+n}{2}\Bigr)^{3/4}-n-2,
  \qquad n\in\bigl[-\tfrac78,1\bigr],
\end{equation}
and, on $(-1,-\frac78)$, parametrically by
\begin{equation}
  \label{eq:Lambda-param}
  \begin{gathered}
  \Lambda(n_b)\coloneqq-1+\frac{(2-b)b^3}{1-b},
  \qquad b\in\bigl(0,\tfrac12\bigr),\\[0.5ex]
  \text{where}\qquad
  n_b\coloneqq-1+\frac{b^4(3-2b)}{4(1-b)^2}.
  \end{gathered}
\end{equation}
At $-\frac78$, both formulas give the value $-\frac58$.
The proof below shows that $b\mapsto n_b$ increases bijectively from $(0,\frac12)$ onto $(-1,-\frac78)$ and that $\Lambda$ is a strictly increasing bijection of $[-1,1]$.

\begin{corollary}[Exact region of Blest's coefficient and its transpose]
\label{cor:nu-transpose}
It holds that
\begin{align*}
  \bigl\{(\nu(C),\nu(C^\top)):C\in\CC\bigr\}
  &=\bigl\{(n,m)\in[-1,1]^2:\ |n-m|\le2\Upsilon\bigl(\tfrac{n+m}{2}\bigr)\bigr\}\\
  &=\bigl\{(n,m)\in[-1,1]^2:\ \Lambda^{-1}(n)\le m\le\Lambda(n)\bigr\}.
\end{align*}
For every $n\in[-1,1]$, the maximum $\Lambda(n)$ and the minimum $\Lambda^{-1}(n)$ of $\nu(C^\top)$ over all copulas $C$ with $\nu(C)=n$ are each attained by exactly one copula.
For $n=-1$, this copula is $W$.
For $n\in(-1,1]$, the maximizer is the transpose of an upper extremizer of Theorem~\ref{thm:eta-nu}, namely $A_w^\top$ with $w=(\frac{1+n}{2})^{1/4}$ if $n\ge-\frac78$ and $B_b^\top$ with $n_b=n$ if $n<-\frac78$, whereas the minimizer is an upper extremizer $A_w$ or $B_b$ itself.
\end{corollary}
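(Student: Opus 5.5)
The first equality is a change of coordinates applied to Theorem~\ref{thm:eta-nu}. The map $(e,n)\mapsto(n,2e-n)$ is a linear bijection of $\R^2$. By \eqref{eq:eta-def} it sends $(\eta(C),\nu(C))$ to $(\nu(C),\nu(C^\top))$, and its inverse is $(n,m)\mapsto(\frac{n+m}{2},n)$. The condition $|n-e|\le\Upsilon(e)$ becomes $|n-m|\le2\Upsilon(\frac{n+m}{2})$. The constraint $e\in[-1,1]$, together with $\Upsilon\le\frac{27}{128}$, is equivalent to $(n,m)\in[-1,1]^2$ on that set: from $-1\le\nu(C),\nu(C^\top)\le1$ (Fréchet--Hoeffding bounds, as in the proof of Theorem~\ref{thm:eta-nu}) one direction holds, and conversely $n,m\in[-1,1]$ gives $e\in[-1,1]$. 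This yields the first description.

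For the second description I would trace the upper boundary curve $n\mapsto e+\Upsilon(e)$ and the lower curve $e-\Upsilon(e)$ in the new coordinates. Points on the lower boundary of the $(\eta,\nu)$-region, namely $\nu=\nu(A_w^\top)$ or $\nu(B_b^\top)$, map to $(n,m)=(\nu(A_w^\top),\nu(A_w))$; points on the upper boundary map to $(\nu(A_w),\nu(A_w^\top))$. I would compute $\nu(A_w^\top)=2\eta(A_w)-\nu(A_w)=2w^4-1$ from \eqref{eq:Aw-values}. For $w\in[\frac12,1]$ this gives $n=2w^4-1\in[-\frac78,1]$, so $w=(\frac{1+n}{2})^{1/4}$ and $m=\nu(A_w)=4w^3-2w^4-1=4(\frac{1+n}{2})^{3/4}-n-2=\Lambda(n)$. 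Similarly, $\nu(B_b^\top)=2e_b-\nu(B_b)$ simplifies to $n_b$ in \eqref{eq:Lambda-param}, with $m=\nu(B_b)$.

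Monotonicity then needs to be checked. For $b\mapsto n_b$, I would differentiate $b^4(3-2b)/(1-b)^2$ and expect a positive factor such as $b^3(\ldots)/(1-b)^3$. For $b\mapsto\nu(B_b)$, $\frac{(2-b)b^3}{1-b}$ is increasing. This makes $\Lambda$ a continuous strictly increasing bijection with $\Lambda(-1)=-1$ and $\Lambda(1)=1$, and the values agree at $b=\frac12$, where $n=-\frac78$ and $m=-\frac58$.

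Because transposition swaps $n$ and $m$, the curve $\{(\nu(C^*),\nu(C^{*\top}))\}$ over the upper extremizers $C^*$ is the graph of $\Lambda^{-1}$, and the transposed copulas trace the graph of $\Lambda$. The region is the set between them. To see this, note that each fibre of Theorem~\ref{thm:eta-nu} is a segment on the antidiagonal line $n+m=2e$, with endpoints on the two graphs. As $e$ ranges over $[-1,1]$, these segments fill exactly $\{\Lambda^{-1}(n)\le m\le\Lambda(n)\}$, since $\Lambda\ge\mathrm{id}$. That inequality follows because $\nu(C^{*\top})\le\nu(C^*)$ fails only in the wrong direction; concretely, $\Upsilon\ge0$ gives $m-n=-2\Upsilon\le0$ on the image of upper extremizers. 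The main point to verify is that each antidiagonal line meets each graph exactly once, which follows from strict monotonicity of $\Lambda$.

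Uniqueness at fixed $n$ is the delicate step. Suppose $\nu(C)=n$ and $\nu(C^\top)=\Lambda(n)$. Then $(n,\Lambda(n))$ lies on the boundary graph, so $e=\frac{n+\Lambda(n)}{2}$ and $\nu(C)=e-\Upsilon(e)$ is a lower extreme of the $(\eta,\nu)$ fibre. The uniqueness part of Theorem~\ref{thm:eta-nu} then forces $C$ to be the transpose of the upper extremizer, $A_w^\top$ or $B_b^\top$. The parameters are identified from $\nu(A_w^\top)=n$ or $n_b=n$. The minimizer case is symmetric and yields $A_w$ or $B_b$, and at $n=-1$ the endpoint fibre gives $W$.
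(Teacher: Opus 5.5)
Your proposal is correct and follows essentially the paper's route: transport Theorem~\ref{thm:eta-nu} through the linear map $(e,n)\mapsto(n,2e-n)$, identify the curve traced by the transposed extremizers with the graph of $\Lambda$ via $\nu(A_w^\top)=2w^4-1$ and $\nu(B_b^\top)=n_b$, and inherit uniqueness from the theorem. The differences are minor. You get monotonicity by differentiating in $b$ rather than from $|\Upsilon'|<1$; indeed $\partial_b n_b=\frac{b^3(3b^2-8b+6)}{2(1-b)^3}>0$. You get the ``between the graphs'' description from antidiagonal crossings rather than from convexity. You should add the skipped check $\Lambda'(n)=\frac32\bigl(\frac{1+n}{2}\bigr)^{-1/4}-1\ge\frac12$ on $[-\frac78,1]$, because your crossing argument needs $\Lambda$ to be strictly increasing on all of $[-1,1]$.
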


\begin{figure}[htbp]
  \centering
  \includegraphics[width=0.85\textwidth]{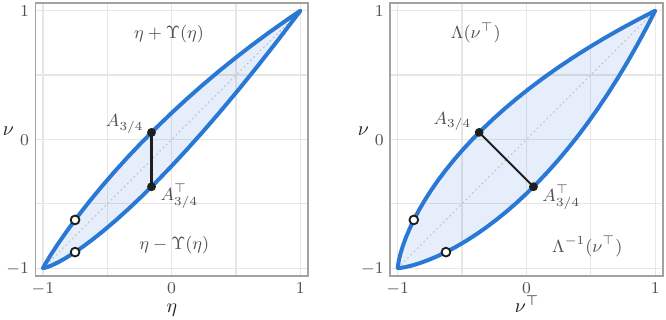}
  \caption{Keeping $\nu$ and replacing $\eta$ by $\nu^\top=2\eta-\nu$ carries the $(\eta,\nu)$-region of Theorem~\ref{thm:eta-nu} (left) onto the region of all pairs $(\nu^\top,\nu)=(\nu(C^\top),\nu(C))$ (right), which is symmetric about the dotted diagonal and hence coincides with the region of Corollary~\ref{cor:nu-transpose}.
  Each point moves horizontally, so the upper extremizers $A_w$ and $B_b$ stay on the upper boundary, which becomes the graph of $\Lambda$, and their transposes stay on the lower one, the graph of $\Lambda^{-1}$.
  The black segments join $A_{3/4}$ and $A_{3/4}^\top$, which realize the largest deviations $|\nu-\eta|=\frac{27}{128}$ and $|\nu-\nu^\top|=\frac{27}{64}$, and the open circles mark the change of regime at $\eta=-\frac34$ and its image.}
  \label{fig:nu-transpose}
\end{figure}

\begin{proof}
Consider the linear bijection $L(e,n)\coloneqq(n,2e-n)$ of $\R^2$.
Since $\nu(C^\top)=2\eta(C)-\nu(C)$ by \eqref{eq:eta-def}, it maps $(\eta(C),\nu(C))$ to $(\nu(C),\nu(C^\top))$ for every $C\in\CC$, and a copula attains a point of the $(\eta,\nu)$-region exactly when it attains the image of this point under $L$.
The first set equality then follows from Theorem~\ref{thm:eta-nu}, because $L^{-1}(n,m)=(\frac{n+m}{2},n)$ and $|n-\frac{n+m}{2}|=\frac{|n-m|}{2}$.

Under $L$, the lower and upper boundary curves $e\mapsto(e,e\mp\Upsilon(e))$ of the $(\eta,\nu)$-region land on
\[
  K_\pm\coloneqq\bigl\{\bigl(e\mp\Upsilon(e),\,e\pm\Upsilon(e)\bigr):e\in[-1,1]\bigr\},
\]
respectively.
Both coordinates of these curves increase strictly in $e$, because $|\Upsilon'|<1$ on $(-1,1)$.
To see the latter, differentiate \eqref{eq:Upsilon-def} to get $\Upsilon'(e)=1-\frac43\,2^{-1/3}(1+e)^{1/3}\in[-\frac13,\frac13]$ for $e\in[-\frac34,1)$, and divide the derivatives of $\Upsilon(e_b)$ and $e_b$ with respect to $b$, computed in the proofs of Corollary~\ref{cor:asymmetry} and Lemma~\ref{lem:eta-nu-values}, to get $\Upsilon'(e_b)=\frac{2-3b}{2-b}\in(\frac13,1)$ for $b\in(0,\frac12)$.
So $K_+$ is the graph of a strictly increasing bijection of $[-1,1]$, and its reflection $K_-$ in the diagonal is the graph of the inverse bijection.
As the image of the convex set $\CC$ under a map that is affine by Lemma~\ref{lem:coupling}, the $(\nu,\nu^\top)$-region is convex, it contains the diagonal points $L(e,e)=(e,e)$ by Theorem~\ref{thm:eta-nu}, and its boundary is $K_+\cup K_-$.
The fibre above $n$ is therefore the interval between the points of $K_-$ and $K_+$ with first coordinate $n$.

What remains is to identify $K_+$ with the graph of $\Lambda$.
At parameter $e$, the point of $K_+$ is $L(e,e-\Upsilon(e))$, and by Theorem~\ref{thm:eta-nu} only the transpose of the upper extremizer at $e$ attains it.
For $e\in[-\frac34,1]$ and $w=(\frac{1+e}{2})^{1/3}$, \eqref{eq:eta-def} and \eqref{eq:Aw-values} give
\[
  \nu(A_w^\top)=2\eta(A_w)-\nu(A_w)=2w^4-1,
  \qquad
  \nu(A_w)=4w^3-2w^4-1.
\]
Set $n\coloneqq\nu(A_w^\top)\in[-\frac78,1]$ and eliminate $w$ through $w^4=\frac{1+n}{2}$: since $\nu((A_w^\top)^\top)=\nu(A_w)$, this yields \eqref{eq:Lambda-def}.
For $e=e_b$ with $b\in(0,\frac12)$, \eqref{eq:Ba-values} likewise gives $\nu(B_b^\top)=2e_b-\nu(B_b)=n_b$ and $\nu(B_b)=\Lambda(n_b)$ as in \eqref{eq:Lambda-param}.
The quantity $n_b=e_b-\Upsilon(e_b)$ is the first coordinate of $K_+$ at $e=e_b$, hence strictly increasing in $e_b$ and so in $b$, with limits $-1$ as $b\to0$ and $-\frac78$ as $b\to\frac12$.
Adding the endpoint $(-1,-1)$, which only $W$ attains by the proof of Theorem~\ref{thm:eta-nu}, identifies $K_+$ as the graph of $\Lambda$.
Uniqueness of the copulas attaining the points of $K_+$ and $K_-$ comes from Theorem~\ref{thm:eta-nu}: they are the transposes of the upper extremizers and the upper extremizers themselves, respectively.
\end{proof}

\Needspace{5\baselineskip}
\begin{remark}[Randomization and a by-product]
\label{rem:randomization}
\leavevmode
\begin{enumerate}[label=(\alph*),leftmargin=2em]
\item
For $w\in(0,\frac12)$, the identity $\nu(A_w)-\eta(A_w)=2(1-w)w^3$ still holds, yet the uniqueness statement in Theorem~\ref{thm:eta-nu} rules these copulas out as extremizers: for $\eta\in(-1,-\frac34)$, the boundary belongs to $B_b$ instead.
For this copula, the conditional law of $Z$ given a leading rank $X=x\in(1-b,1]$ consists of the two atoms in \eqref{eq:Ba-branches}.
Their branches are fixed by the equality set of Proposition~\ref{prop:dual-B}, their probabilities by the uniform marginals.

The first-order condition links the two branches: wherever the first potential is differentiable, a contact point satisfies $\partial_xd_\kappa(x,z)=\varphi'(x)$.
This equation is quadratic in $z$, so it has at most two solutions, and they sum to $2x/\kappa$, as in \eqref{eq:branch-sum}.
For $\kappa=\kappa_b>2$, the unique optimizer found above uses both of them on a set of positive measure.
The transpose $B_b^\top$, on the other hand, is an ordinary graph copula for the map $R_b$, so the extremal coupling is a graph over the second coordinate but not over the first.
\item
By \eqref{eq:reflection-transform} and \eqref{eq:eta-reflection},
\[
  \nu(C^\perp)-\eta(C^\perp)=\rho(C)-\eta(C).
\]
The map $C\mapsto C^\perp$ is an involution on $\CC$, so Corollary~\ref{cor:asymmetry} yields one more sharp bound, $|\rho(C)-\eta(C)|\le\frac{27}{128}$.
Only $A_{3/4}^\perp$, the shuffle of $M$ that is countermonotone on $[0,\frac34]$ and comonotone on $[\frac34,1]$, reaches the maximum $\rho-\eta=\frac{27}{128}$, and only its survival copula $(A_{3/4}^\top)^\perp$ reaches the minimum.
Shuffles of this kind showed in \cite[Ex.~4]{genest2003blest} that $\eta$ violates one of Scarsini's axioms.
\end{enumerate}
\end{remark}

\bibliographystyle{plainnat}
\bibliography{references}

\end{document}